\newtheorem{theorem}{Theorem}[section]
\newtheorem*{claim*}{Claim}
\newtheorem{corollary}[theorem]{Corollary}
\newtheorem{lemma}[theorem]{Lemma}
\newtheorem{proposition}[theorem]{Proposition}
\newtheorem{remark}[theorem]{Remark}
\newcommand{\be}[1]{\begin{equation}\label{#1}}
\newcommand{\ee}{\end{equation}}
\numberwithin{equation}{section}
\newcommand{\ba}[1]{\begin{align}\label{#1}}
\newcommand{\ea}{\end{align}}
\numberwithin{equation}{section}
\newcommand{\ben}{\begin{equation*}}
\newcommand{\een}{\end{equation*}}
\numberwithin{equation}{section}
\renewenvironment{proof}[1][\relax]
  {\paragraph{Proof\ifx#1\relax\else~of #1\fi}}%
  {~\hfill$\square$\par\bigskip}
\newcommand{\calD}{\mathcal{D}}
\newcommand{\calW}{\mathcal{W}}
\newcommand{\bbC}{\mathbb{C}}
\newcommand{\bbR}{\mathbb{R}}
\newcommand{\bbZ}{\mathbb{Z}}
\newcommand{\ep}{\varepsilon}
\newcommand{\eps}{\ep}
\newcommand{\om}{\omega}
\newcommand{\Om}{\Omega}
\newcommand{\si}{\sigma}
\newcommand{\De}{\Delta}
\newcommand{\ovr}{\overline}
\newcommand{\Tr}{{\rm Tr}}
\newcommand{\rk}[1]{\bgroup\color{red}%
  \par\medskip\hrule\smallskip%
  \noindent\textbf{#1}%
  \par\smallskip\hrule\medskip\egroup}
\newcommand{\ind}{\mathbf{1}}
\def\mik{1}
\newcommand\cpsfrag[2]{\ifnum\mik=1\psfrag{#1}{#2}\fi}
\title{The Bethe ansatz for the six-vertex and XXZ models: an exposition}
\author{
	Hugo Duminil-Copin \thanks{I.H.\'E.S.} \thanks{University of Geneva}\ , 
	Maxime Gagnebin\addtocounter{footnote}{-1}\footnotemark\ , 
	Matan Harel\addtocounter{footnote}{-1}\footnotemark\ ,\\ 
	Ioan Manolescu\thanks{University of Fribourg}\ , 
	Vincent Tassion\addtocounter{footnote}{-2}\footnotemark}
\date{\today}
\begin{document}

\maketitle

\begin{abstract}
In this paper, we review a few known facts on the coordinate Bethe ansatz.
We present a detailed construction of the Bethe ansatz vector $\psi$ and energy $\Lambda$, which satisfy $V \psi = \Lambda \psi$, where $V$ is the the transfer matrix of the six-vertex model on a finite square lattice with periodic boundary conditions for weights $a= b=1$ and $c > 0$. We also show that the same vector $\psi$ satisfies $H \psi = E \psi$, where $H$ is the Hamiltonian of the XXZ model (which is the model for which the Bethe ansatz was first developed), with a value $E$ computed explicitly. 

Variants of this approach have become central techniques for the study of exactly solvable statistical mechanics models in both the physics and mathematics communities. Our aim in this paper is to provide a pedagogically-minded exposition of this construction, aimed at a mathematical audience. It also provides the opportunity to introduce the notation and framework which will be used in a subsequent paper by the authors \cite{BetheAnsatz2} that amounts to proving that the random cluster model on $\mathbb{Z}^2$ with cluster weight $q >4$ exhibits a first-order phase transition. 
\end{abstract}

\section{Introduction}
The study of statistical mechanics has greatly benefited from the analysis of exactly solvable lattice models. Although we will not offer a proper definition of the notion of exact solvability, its essence lies in the existence of closed-form formulae for many of the important thermodynamics quantities associated with the model. Perhaps the earliest example in modern statistical mechanics came in 1931, with Bethe's \cite{Bethe31} approach to diagonalizing the Hamiltonian of the XXZ model, a particular case of the anisotropic one-dimensional Heisenberg chain. His technique, now known as the {\em coordinate Bethe ansatz}, shows that, given a solution to a (relatively) small number of simultaneous nonlinear equations, one can construct a candidate eigenvector and eigenvalue - i.e. a vector $\psi$ satisfying $H \psi = E \psi$.  

In 1967, Lieb \cite{Lie67b} noticed that the same construction can be used to find candidate eigenvectors for the transfer matrix of the six-vertex model. This model, initially proposed by Pauling in 1931 for the study of the thermodynamic properties of ice, is a major object of study on its own right: see \cite{Resh10} and Chapter 8 of \cite{Bax89} (and references therein) for a bibliography on the six-vertex model. 

The work of Baxter \cite{Bax72} showed that there is a rich algebraic structure to the six-vertex model (as well as the eight-vertex model, which generalizes it). His approach, based on commuting matrices and the so-called Yang-Baxter relations, led to a great generalization of Bethe's original technique. This approach, called the {\em algebraic Bethe ansatz} (to distinguish it from the coordinate Bethe ansatz), has been at the heart of the study of exactly solvable models in the next two decades (see \cite{Jimbo} for a short survey of this work, and \cite{Korepin} for a more complete description). 

In this paper, we will focus on the original formulation -- the coordinate Bethe ansatz -- as it is sufficient for our analysis of the six-vertex model. Besides being a model of independent interest, this model is also deeply connected to Fortuin-Kasteleyn percolation, which motivates our work here; we defer a discussion of this connection to \cite{BetheAnsatz2}. As such, we will present a detailed derivation, aimed at a mathematical audience, of the construction of a candidate eigenvector for the six-vertex transfer matrix, under toroidal boundary conditions. 

Our goal will be to provide a proof of the statements which will be needed in the subsequent papers of this series. This include the ``singular'' case of the construction, in which one of the solutions to Bethe's equations is zero (see below for formal definitions). The paper ends with a short proof of the fact that the construction for the XXZ model is, in fact, identical to the one used for the six-vertex model. While these results are not new, we hope that an elementary exposition will nonetheless be of some use to the community.

\section{Definitions and statements of main theorems}\label{sec:notation}
\subsection{The six-vertex model and its transfer matrix}
For the rest of the paper, fix two positive integers $M$ and $N$.
Write $\bbZ_N$ and $\bbZ_M$ for the cyclic groups of order $N$ and $M$, respectively (which are identified with $\{1,\dots,N\}$ and $\{1,\dots,M\}$, respectively).
Consider the torus $\mathbb{T}_{N,M}$, with vertex set $ \bbZ_N \times \bbZ_M$ and edges between vertices at $\|\cdot\|_1$-distance 1 of each others.

Let $\omega$ be an arrow configuration on the edges of $\mathbb{T}_{N,M}$ -- i.e. a map $\omega$ from edge-set of $\mathbb T_{N,M}$ to $\{-1,1\}$, where $+1$ is considered as a right or up arrow, and -1 as a left or down arrow. The six-vertex model is given by restricting $\omega$ to configurations that have an equal number of arrows entering and exiting each vertex, i.e.~formally satisfying the {\em ice rule}
\[
\forall v \in \mathbb{T}_{N,M}, \quad \displaystyle \sum_{\begin{subarray}\ \text{edge }e\text{ with}\\\text{endpoint }v\end{subarray}} \omega(e) = 0 \, .
\]
The rule leaves six possible configurations at each vertex, depicted in Figure~\ref{fig:6vertices}. 
Assign the weight $a$ to configurations 1 and 2, $b$ to 3 and 4, and $c$ to 5 and 6. 
This choice is made to ensure that the weight is invariant under a global arrow flip.
Letting $n_i$ be the number of vertices with configuration $i$ in $\omega$, define the weight of $\om$ as
\[
w(\omega) = a^{n_1 + n_2} \cdot b^{n_3 + n_4} \cdot c^{n_5 + n_6} \, .
\]
Furthermore, if $\om$ does not obey the ice rule, set $w(\om) = 0$.  
The partition function of the model is given by
\[
Z_{6V}(a,b,c) = \displaystyle \sum w(\omega) \, , 
\]
where the sum is over all $4^{NM}$ arrow configurations, or equivalently, over all arrow configurations satisfying the ice rule. 

\begin{figure}[htb]
	\begin{center}
		\includegraphics[width=0.6\textwidth, page=1]{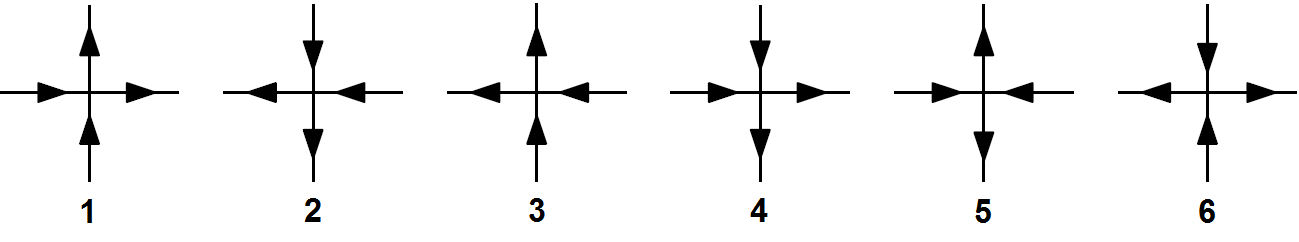}
	\end{center}
	\label{fig:6vertices}
	\caption{The six possibilities for vertices in the six-vertex models.
	Each possibility comes with a weight $a$, $b$ or $c$.}
\end{figure}

In this paper, we will study the isotropic model, in which $a = b =1$, while $c > 0$.
We note that similar statements can be formulated to generalize our work to arbitrary positive values of $a,b$ and $c$. 


\bigbreak
We now introduce a matrix $V$ which turns out to be the {\em transfer matrix} of the model (see the next section for more details).  Let $\vec x=(x_1,\dots,x_n)$ denote a set of orderd integers $1\le x_1<\dots<x_n\le N$ with $0\le n\le N$. The quantity $x_i$ will refer to the $i$-th coordinate of $\vec x$. We set $|\vec x|$ for the length $n$ of $\vec x$.

Let $\Omega = \{-1, 1\}^{\otimes N}$ be the $2^N$-dimensional real vector space spanned by the basis $\big\{\Psi_{\vec x}\big\}_{\vec x}$, where for any $\vec x$, $\Psi_{\vec x} \in \{\pm 1\}^{N}$ is given by
$$ 
	\Psi_{\vec x}(i) = 
	\begin{cases}
		+1 \quad \text{ if $i \in \{x_1,\dots, x_n\}$},\\
		-1 \quad \text{ if $i \notin \{x_1,\dots, x_n\}$}.
	\end{cases}
$$
Associate to each $\Psi_{\vec x}$ a sequence of vertical arrows entering or exiting the vertices of $\mathbb{Z}_N$, with up arrows at $x_i$ and down arrows otherwise. Note that $|\vec x|$ corresponds to the number of up arrows of $\Psi_{\vec x}$.

For two basis vectors $\Psi_{\vec{x}}, \Psi_{\vec y} \in \Omega$,  we say that $\Psi_{\vec x}$ and $\Psi_{\vec y}$ are {\em interlaced} if $|\vec{x}| = |\vec y|$ and  
\[
x_1 \leq y_1 \leq x_2\le \dots \leq x_n \leq y_n \quad\text{ or }\quad y_1 \leq x_1 \le y_2\leq \dots \leq y_n \leq x_n \, .
\]
For a pair of interlacing vectors, we define 
\[
	P(\Psi_{\vec{x}}, \Psi_{\vec{y}}) 
	 =| \{ i \in \bbZ_N :\, \Psi_{\vec{x}}(i) \neq \Psi_{\vec{y}}(i)\}|\, .
\]
The matrix $V$ is an endomorphism of $\Om$ written in the basis $(\Psi_{\vec{x}})_{\vec{x}}$. 
It is defined as follows:
\begin{align}\label{eq:V}
	V (\Psi_{\vec{x}}, \Psi_{\vec{y}}) = 
	\left\{\begin{array}{lr} 
		2 & \text{ if }\Psi_{\vec{x}} = \Psi_{\vec{y}}; \\ 
		c^{P(\Psi_{\vec{x}}, \Psi_{\vec{y}})}
		& \text{ if } \Psi_{\vec{x}} \neq \Psi_{\vec{y}} \text{ and } \Psi_{\vec{x}} \text{ and }  \Psi_{\vec{y}} \, \text{ are interlaced};\\ 
		0 & \text{otherwise}. 
	\end{array}\right. 
\end{align}

The spectral properties of this matrix encode many properties of the associated six-vertex model. As a motivating example, we will prove in Section~\ref{sec:V} one of the simplest such associations, between the trace of $V^M$ and the partition function of the six-vertex model.

\begin{proposition}\label{prop:transfer-matrix}
	$V$ is a block diagonal, symmetric matrix, fixing the subspaces
	\[
	\Omega_n := {\rm Span} \{ \Psi_{\vec{x}}: |\vec x|=n\} \, .  
	\]
	Furthermore, $Z_{6V}(1,1,c) = \Tr(V^M)$.
\end{proposition}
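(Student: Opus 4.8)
The two structural assertions are immediate from the definition~\eqref{eq:V}. For symmetry, observe that each of the three cases defining $V(\Psi_{\vec x},\Psi_{\vec y})$ is unchanged under exchanging $\vec x$ and $\vec y$: the equality $\Psi_{\vec x}=\Psi_{\vec y}$ is symmetric; the interlacing condition is symmetric, since swapping $\vec x$ and $\vec y$ merely interchanges its two alternatives; and $P(\Psi_{\vec x},\Psi_{\vec y})$, being the number of coordinates at which the two vectors differ, is manifestly symmetric. For block-diagonality, note that a nonzero entry $V(\Psi_{\vec x},\Psi_{\vec y})$ forces $|\vec x|=|\vec y|$: the diagonal case gives $\vec x=\vec y$, while interlacing is only defined, and can only hold, when $|\vec x|=|\vec y|$. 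Hence $V$ maps each $\Omega_n$ into itself, so it is block diagonal with respect to $\Omega=\bigoplus_{n=0}^N\Omega_n$.

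The identity $Z_{6V}(1,1,c)=\Tr(V^M)$ is the statement that $V$ is the row-to-row transfer matrix of the model. First I would fix a dictionary between configurations and cyclic sequences of vertical states. Cutting the torus along the $M$ circles separating consecutive rows, an arrow configuration $\omega$ records on the $j$-th cut a vector in $\{-1,+1\}^N$, i.e.\ a basis vector $\Psi_{\vec{x}^{(j)}}$; thus $\omega$ is the data of the cyclic sequence $(\Psi_{\vec{x}^{(0)}},\dots,\Psi_{\vec{x}^{(M-1)}})$ together with the horizontal arrows inside each row. The plan is to encode the contribution of a single row by a local weight and then assemble the rows into a trace. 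Writing $\mu=(\mu_1,\dots,\mu_N)\in\{-1,+1\}^N$ for the horizontal arrows of a row, with $\mu_i$ the edge to the left of the $i$-th vertex and indices read cyclically, and $R(\mu_i,s;\mu_{i+1},t)$ for the single-vertex weight with bottom arrow $s$, top arrow $t$, left arrow $\mu_i$ and right arrow $\mu_{i+1}$ (equal to $0$ unless the ice rule holds at that vertex, to $1$ for the weight-$a$ and weight-$b$ types, and to $c$ for the two weight-$c$ types), I would prove the \emph{key lemma}
\begin{equation*}
V(\Psi_{\vec x},\Psi_{\vec y})=\sum_{\mu\in\{-1,+1\}^N}\ \prod_{i=1}^N R\big(\mu_i,\Psi_{\vec x}(i);\,\mu_{i+1},\Psi_{\vec y}(i)\big).
\end{equation*}

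Granting this lemma, the conclusion is bookkeeping: expanding
\begin{equation*}
\Tr(V^M)=\sum_{\vec{x}^{(0)},\dots,\vec{x}^{(M-1)}}\ \prod_{j=0}^{M-1}V\big(\Psi_{\vec{x}^{(j)}},\Psi_{\vec{x}^{(j+1)}}\big),
\end{equation*}
with indices read modulo $M$, and substituting the lemma turns the right-hand side into a sum, over all assignments of vertical and horizontal arrows on $\mathbb{T}_{N,M}$, of the product of all single-vertex weights. Since $R$ vanishes off the ice rule and reproduces the weights $a=b=1$ and $c$ elsewhere, this equals $\sum_\omega w(\omega)=Z_{6V}(1,1,c)$.

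The main obstacle is the key lemma, which I would establish by analyzing how the horizontal arrows propagate around the cyclic row once $\Psi_{\vec x}$ (below) and $\Psi_{\vec y}$ (above) are fixed. At each vertex the ice rule, together with the incoming left arrow $\mu_i$ and the prescribed pair $(\Psi_{\vec x}(i),\Psi_{\vec y}(i))$, determines the outgoing right arrow $\mu_{i+1}$ and the vertex type whenever a valid vertex exists, so a row configuration is pinned down by $\mu_1$ subject to a cyclic consistency condition. Summing the ice rule across the row shows a compatible $\mu$ can exist only when $|\vec x|=|\vec y|$, matching block-diagonality. When $\Psi_{\vec x}=\Psi_{\vec y}$ the vertical arrow passes straight through every vertex, so $\mu$ is constant around the ring; both choices $\mu\equiv+1$ and $\mu\equiv-1$ are admissible and use only weight-$1$ vertices, giving the diagonal entry $1+1=2$. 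When $\Psi_{\vec x}\neq\Psi_{\vec y}$, I would show that the vertices where $\Psi_{\vec x}$ and $\Psi_{\vec y}$ differ are exactly the ones forced to be of weight-$c$ type and to flip the horizontal arrow, that these forced flips must alternate in sign around the ring, and that this alternation is precisely the interlacing condition; interlacing then forces a unique $\mu$ carrying weight $c^{P(\Psi_{\vec x},\Psi_{\vec y})}$, while every non-interlaced pair yields an empty sum and hence entry $0$. Proving this trichotomy, in particular the equivalence between cyclic solvability and interlacing, is the crux of the argument.
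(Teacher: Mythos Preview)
Your proposal is correct and follows essentially the same route as the paper: your ``key lemma'' is precisely the identification of $V$ with the standard row transfer matrix $\widetilde V$ (the paper's Corollary~\ref{cor:MatrixEquality}), and your propagation argument for the trichotomy---horizontal arrows are constant at neutral vertices, flip at weight-$c$ vertices, and cyclic consistency forces sources and sinks to alternate, which is exactly interlacement---is the content of the paper's Lemma~\ref{lem:Interlacement}. The assembly of rows into $\Tr(V^M)$ is then the same bookkeeping in both.
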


\subsection{Statement of the Bethe ansatz}\label{sec:stat-bethe-ansatz}
In light of the above proposition, we have a clear interest in studying the spectral properties of $V$ as these provide asymptotics for the partition function of the model (and other related quantities). 
For more precise statements, see \cite{BetheAnsatz2}.
The main theorem of this paper is the explicit construction of $\psi$ and $\Lambda$ such that $V \psi = \Lambda \psi$. This is the eponymous Bethe Ansatz, and its proof takes up the majority of this text. 

Set $\Delta := (2 - c^2)/2$, and define the function $S : \bbR^2 \to \bbC$ by
\[
	S(x,y) := e^{-ix} + e^{iy} - 2 \Delta \, . 
\] 
If $\De \in [-1,1)$, define $\mu$ to be the unique solution to $\cos(\mu) = -\Delta$, $\mu \in [0,\pi)$. For $\De < -1$, set $\mu =0$. We introduce the set $\mathcal{D}_\Delta := (-\pi + \mu, \pi - \mu)$. Next, we define $\Theta: \mathcal{D}_\Delta^2 \rightarrow \mathbb{R}$ to be the unique continuous function which satisfies $\Theta(0,0) = 0$ and
\[
	\exp(-i \Theta(x,y)) = e^{i(x-y)} \cdot \frac{S(x,y)}{S(y,x)} \, .
\]
It may be shown that such a function $\Theta$ exists and that it real and analytic on $\calD_\De$, for any $\De < 1$.
In this paper we will only use its differentiability, antisymmetry, 
and the algebraic relation~\eqref{eq:fundamental} which follows directly from the definition.

For $z\neq 1$, we set 
\begin{equation}\label{eq:LMRhoDef}
	L(z):= 1 + \frac{c^2 z}{1-z} \, , \qquad 
	M(z):= 1 - \frac{c^2}{1-z} \, . 
\end{equation}
For $|\vec x|=n$ and $(p_1, p_2, \dots, p_n) \in \mathcal{D}_\Delta^n$,  set 
	\begin{align}\label{eq:BA_eigenvector}
		\psi(\vec x) 
		:= \sum_{\sigma \in \mathfrak{S}_n} A_\sigma \prod_{k=1}^n \exp\left(i p_{\sigma(k)}x_k \right) \, ,
	\end{align}
	where $\mathfrak{S}_n$ is the symmetric group on $n$ elements and 
	\begin{align}\label{eq:A_si}
		A_\sigma := \varepsilon(\sigma)\,  \prod_{1 \leq k < \ell \leq n} e^{ip_{\sigma(k)}}\  S(p_{\sigma(k)},p_{\sigma(\ell)}),
		\qquad \text{for $\sigma \in \mathfrak{S}_n$},	
	\end{align}
	with $\varepsilon(\sigma)$ being the signature of the permutation. We also define the vector $\psi \in \Omega$ by
	\[ 
		\psi = \sum_{|\vec x|=n}\psi(\vec x)\, \Psi_{\vec x} \, .
	\]

\begin{theorem}[Bethe Ansatz for $V$]\label{thm:BA} 
	Fix $n \leq N/2$. 
	Let $(p_1, p_2, \dots, p_n) \in \mathcal{D}_\Delta^n$ be distinct and satisfy the equations
	\begin{align}\tag{BE}\label{eq:BA}
		\exp\left(i N p_j\right) 
		= (-1)^{n-1} \exp \left( -i\sum_{k=1}^n \Theta(p_j,p_k) \right),  \quad \forall j \in \{1, 2, \dots, n\}. 
	\end{align}
	Then, $\psi$ satisfies the equation $V\psi = \Lambda \psi$, where 
	\begin{align}\label{eq:BA_eigenvalue}
		\Lambda := 		
		\begin{cases}
			\displaystyle \prod_{j=1}^n L(e^{ip_j}) + \prod_{j=1}^n M(e^{ip_j}),\quad &\text{if $p_1,\dots,p_n$ are non zero,} \vspace{3pt}\\
			\displaystyle
			\Big[2+ c^2 (N-1) + c^2 \sum_{j\neq \ell} \partial_1 \Theta (0,p_{j}) \Big] \cdot \prod_{j \neq \ell} M(e^{ip_j}),
			\quad &\text{if $p_\ell = 0$ for some $\ell$.}
		\end{cases}
	\end{align}
\end{theorem}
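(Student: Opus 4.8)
The plan is to verify the identity $V\psi = \Lambda\psi$ coordinate by coordinate. By Proposition~\ref{prop:transfer-matrix}, $V$ is symmetric and preserves $\Omega_n$, and since $\psi \in \Omega_n$ it suffices to check that $(V\psi)(\vec x) = \Lambda\,\psi(\vec x)$ for every ordered $\vec x$ with $|\vec x| = n$, where $(V\psi)(\vec x) = \sum_{\vec y} V(\Psi_{\vec x},\Psi_{\vec y})\,\psi(\vec y)$. First I would record two combinatorial facts about \eqref{eq:V}: a configuration $\vec y$ contributes if and only if it is interlaced with $\vec x$, and for interlaced configurations the exponent $P(\Psi_{\vec x},\Psi_{\vec y})$ equals twice the number of coordinates $k$ with $y_k\neq x_k$. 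Consequently the weight $c^{P}$ factorizes over coordinates, and the two interlacing patterns in the definition of ``interlaced'' split the off-diagonal sum into an ``all coordinates move forward'' part and an ``all coordinates move backward'' part.

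Next I would insert the plane-wave expansion \eqref{eq:BA_eigenvector}. For a fixed permutation $\sigma$, each coordinate $y_k$ runs over a window bounded by the neighbouring coordinates of $\vec x$, and the corresponding sum $\sum_{y_k} c^{2\cdot\mathbf 1[y_k\neq x_k]}e^{ip_{\sigma(k)}y_k}$ is a finite geometric series. When all windows are ``free'' (the coordinates of $\vec x$ are spread out and away from the cyclic seam), these sums are the full geometric series, and using the definitions \eqref{eq:LMRhoDef} the forward pattern contributes $\prod_j L(e^{ip_j})$ and the backward pattern contributes $\prod_j M(e^{ip_j})$, the two ``stay-put'' contributions jointly reproducing the diagonal weight $2$ of \eqref{eq:V}. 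This produces exactly the eigenvalue $\Lambda$ of the regular case in \eqref{eq:BA_eigenvalue}. The content of the theorem is therefore that all the remaining boundary terms of the finite geometric sums must cancel.

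These boundary terms come in two kinds. The internal ones arise when two coordinates of $\vec x$ collide (that is, $x_{j+1}=x_j+1$), so that the upper endpoint of one window coincides with the lower endpoint of the next; I would pair each such term with the term obtained by swapping $p_{\sigma(j)}\leftrightarrow p_{\sigma(j+1)}$, i.e. by composing $\sigma$ with the transposition $(j,j+1)$, and check that they cancel. This cancellation is precisely what the special form of the amplitudes \eqref{eq:A_si} is engineered to produce: up to the signature, the ratio $A_{\sigma\circ(j,j+1)}/A_\sigma$ is built from the factors $e^{ip}S(\cdot,\cdot)$, so the collision weight of one term is the negative of its partner. The external boundary terms arise from the cyclic seam, when a coordinate wraps from position $N$ back to $1$; these carry an extra factor $e^{iNp_{\sigma(j)}}$, and reorganizing the product in \eqref{eq:A_si} through the defining relation \eqref{eq:fundamental} for $\Theta$ converts the requirement that they vanish into exactly the Bethe equations \eqref{eq:BA}, the sign $(-1)^{n-1}$ coming from the cyclic relabelling of $\sigma$ needed to restore the standard order.

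Finally I would treat the singular case $p_\ell = 0$, in which $L(e^{ip_\ell}) = 1 + c^2 e^{ip_\ell}/(1-e^{ip_\ell})$ diverges and the product formula for $\Lambda$ is meaningless, even though $\psi$ and the eigenvalue equation remain well defined. Here I would obtain $\Lambda$ as the limit $p_\ell \to 0$ of the regular expression: the pole of $L(e^{ip_\ell})$ is compensated by a vanishing factor coming from the Bethe equation at $j=\ell$, and a first-order Taylor expansion of the phases and of $\Theta$ in $p_\ell$ yields the displayed value $\big[2+c^2(N-1)+c^2\sum_{j\neq\ell}\partial_1\Theta(0,p_j)\big]\prod_{j\neq\ell}M(e^{ip_j})$, the derivatives $\partial_1\Theta(0,p_j)$ being produced by differentiating \eqref{eq:fundamental}. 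I expect the main obstacles to be the careful bookkeeping of the internal collisions (verifying that the $S$-factor ratio really does flip the sign of every colliding boundary term after the transposition) and the extraction of this singular limit; the ``free'' computation and the wraparound-to-Bethe-equation step are comparatively mechanical once the factorization of $c^{P}$ is in hand.
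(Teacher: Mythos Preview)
Your broad outline---verify $(V\psi)(\vec x)=\Lambda\psi(\vec x)$ coordinatewise, insert the plane-wave expansion, reduce to finite geometric sums, and cancel the ``unwanted'' terms by pairing $\sigma$ with $\sigma\circ(j,j+1)$ for the internal pieces and invoking~\eqref{eq:BA} for the wraparound pieces---is the same skeleton the paper uses. Two points, however, are real gaps rather than details to be filled in.

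\textbf{The distinctness constraint on $\vec y$ and the location of the unwanted terms.} You let each $y_k$ run independently over its window and say the internal boundary terms appear only when $x_{j+1}=x_j+1$. In fact $\vec y$ must have \emph{distinct} entries, so in either interlacing pattern the collision $y_k=x_k=y_{k+1}$ is forbidden for every $k$ and every $\vec x$; this is what prevents the sum from factorising. The paper resolves this by inclusion--exclusion over the set $S$ of collisions and then re-encodes the result as a sum over words $w\in\{L,M\}^n$, where the weight $r_\sigma(w)$ carries a factor $M(z_{\sigma(k)})L(z_{\sigma(k+1)})-1$ (not $M\cdot L$) at each transition $w_kw_{k+1}=ML$. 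The unwanted terms are then \emph{all} non-constant words, present for every $\vec x$, and it is this modified factor that makes the transposition pairing $\sigma\mapsto\sigma\circ(k,k+1)$ kill them exactly. Your description resembles the XXZ derivation (where unwanted terms really do come only from adjacent particles) rather than the transfer-matrix computation.

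\textbf{The singular case.} Your plan is to recover the second line of~\eqref{eq:BA_eigenvalue} as the limit $p_\ell\to0$ of $\prod_jL(e^{ip_j})+\prod_jM(e^{ip_j})$. This fails: the Bethe equation at $j=\ell$ with $p_\ell=0$ forces $\prod_{j\ne\ell}L(z_j)=\prod_{j\ne\ell}M(z_j)=:\Pi_M$, so the regular expression collapses identically to $[L(z_\ell)+M(z_\ell)]\,\Pi_M=(2-c^2)\Pi_M$, which is not the singular eigenvalue. The paper explicitly warns that the degenerate formula is \emph{not} a simple limit of the non-degenerate one. What the paper does instead is perturb $p_\ell=0$ to $\varepsilon$ only inside the geometric-series computation of $R_\sigma$, while keeping the amplitudes $A_\sigma$ at their true Bethe values. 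The transposition cancellations that were exact for $p_\ell\ne0$ now leave $O(\varepsilon)$ remainders; these combine with the $O(1/\varepsilon)$ poles of $L(e^{i\varepsilon}),M(e^{i\varepsilon})$ to produce finite contributions in the limit, and it is the Taylor expansion of the \emph{cancellation ratio} $g^{\varepsilon}_{\sigma\circ(m,m+1)}/g^{\varepsilon}_{\sigma}$ that generates the $\partial_1\Theta(0,p_j)$ terms. The derivative does not come from differentiating the eigenvalue formula.
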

We note that the restriction $n \leq N/2$ is insignificant since the transfer matrix $V$ is invariant under global arrow flip, and as a consequence the spectrums of $V$ on $\Omega_n$ and $\Omega_{N - n}$ are identical. 

\subsection{Comments on Theorem~\ref{thm:BA}}\label{sec:rr}
There are several important features of the theorem above which merit explicit mention:

\paragraph{Logarithmic form of the Bethe ansatz} 
This theorem reduces the ${N \choose n}$-dimensional problem of finding an eigenvector of $V$ in $\Omega_n$ to the solution of the $n$ relations~\eqref{eq:BA}, often called {\em Bethe's equations}. In most applications, it is far more instructive to consider the equations in their logarithmic form, i.e.  
\begin{align}\label{eq:BA-Logarthmic}
Np_j = 2 \pi I_j - \displaystyle \sum_{k=1}^n \Theta(p_j, p_k) \quad \forall j \in \{1, 2, \dots, n\} ,
\end{align} 
where $\{I_j\}$ are distinct integers (resp. half integers) if $n$ is odd (resp. even). 

\paragraph{Existence and uniqueness}
The existence of solutions to~\eqref{eq:BA} is nontrivial, and uniqueness is, in general, false (due to our ability to choose $\{I_j\}$ in the logarithmic form). It is more instructive to consider existence and uniqueness for~\eqref{eq:BA-Logarthmic}. In our subsequent paper, we consider a  specific choice of $\{I_j\}$ to prove existence of solutions which will generate the leading eigenvalues of $V$ restricted to $\Omega_{N/2 - k}$ for any fixed $k$. 

\paragraph{The coefficients $A_\sigma$ and the origin of the Bethe's equations \eqref{eq:BA}}
The function $A_\sigma$ is defined in such a way that the following relation holds true for every $1 \leq j < n$:
\begin{equation}\label{eq:fundamental}
\frac{A_{\sigma\circ (j,j+1)}}{A_\sigma}=-\exp\big(i\Theta(p_{\sigma(j)},p_{\sigma(j+1)})\big),
\end{equation}
where $(j,j+1)$ is the transposition permuting $j$ and $j+1$. The relations~\eqref{eq:BA} are introduced in order to obtain a similar identity for the transposition $(1,n)$ permuting $1$ and $n$
\begin{equation}\label{eq:fundamentalBoundary}
\frac{A_{\sigma\circ (1,n)}}{A_\sigma}=-\frac{\exp(iNp_{\si(n)})}{\exp(iNp_{\si(1)})} \times \exp\big( i\Theta(p_{\sigma(n)},p_{\sigma(1)})\big).
\end{equation}
This equation can be seen as enforcing toroidal boundary conditions. Those two relations are proved in Section~\ref{sec:encodWords}, and play a fundamental role in the proof.

\paragraph{The role of the singularity}
Inspecting the form of $L(z)$ and $M(z)$ clearly indicates that the case $p_\ell = 0$ for some $\ell$ requires special treatment. 
Moreover, solutions of~\eqref{eq:BA} in which $p_\ell = 0$ for some $\ell$ are not esoteric. In fact, the leading eigenvalue of $V$ restricted to $\Omega_n$ is given by such a solution whenever $n$ is odd. 

Note that the $p_\ell =0$ formula in~\eqref{eq:BA_eigenvalue} is not given by a simple limit of the formula in the line that precedes it; instead, it includes terms depending on the derivative of $\Theta$ that would have canceled out algebraically in the non-degenerate case. 

This degenerate case only appears when $a = b$. Theorem~\ref{thm:BA}  may be extended to a general six-vertex model by setting $\Delta = (a^2 + b^2 - c^2)/2ab$, and replacing $L$ and $M$ by $[ab + (c^2 - b^2) z]/[a^2 - abz]$ and $[a^2 - c^2 - abz]/[ab - b^2 z]$, respectively (setting $a=b = 1$ gives the formulation above). 
Then, whenever $a \neq b$, $L(e^{ip})$ and $M(e^{ip})$ are bounded for all values of $p$, thus eliminating the need for the singular case of Theorem~\ref{thm:BA}. 

\paragraph{Nontriviality of $\psi$}
It is important to note that the theorem does not guarantee that $\psi$ is a true eigenvector of $V$, as it may be identically equal to 0. In order to apply~\eqref{eq:BA} to deduce information on the spectrum of $V$, one must have an independent argument that ensures that $\psi$ is nonzero. A quick computation shows that, for $(p_1,\dots,p_n) \in \calD_\De^n$ with at least two equal entries, the vector $\psi$ given by~\eqref{eq:BA_eigenvector} is identically $0$; this explains the condition that $p_1,\dots,p_n$ be distinct. Again, specifying a set of $\{I_j\}$ in the logarithmic form of Bethe's equations is an essential step in applications, and usually enables one to prove that $\psi\ne 0$ on a case-by-case basis.

\subsection{The XXZ model}

The final result of this paper relates to the XXZ model on $\mathbb{Z}_N$, which describes a one-dimensional, periodic system of spin $1/2$ particles. For an introduction to the Bethe ansatz that is focused on the XXZ model and aimed at physicists, we refer the interested reader to the work of Karbach, Hu and M\"uller \cite{Karbach2,Karbach3,Karbach1}. Our goal here is not to present a detailed analysis of this model; instead, we will present a short proof that the Bethe ansatz vector $\psi$ of the six-vertex model is also useful for this {\em a priori} very different model.

To do so, we must first define the Hamiltonian $H$ of the XXZ model. We conserve the notation $\Omega$ as the vector space spanned by vertical arrow configurations on $\mathbb{Z}_N$. For any $i \in \mathbb{Z}_N$, let $w_i:\Omega \rightarrow \Omega$ be the linear operator which exchanges the arrows at $i$ and $i+1$, whenever they are different, and is zero otherwise. For $\De <1$, let
\begin{align}\label{eq:H_i}
	H_i (\Psi_{\vec{x}}, \Psi_{\vec{y}}) = 
	\left\{\begin{array}{lr} 
		\Delta/2 & \text{ if }\Psi_{\vec{x}} = \Psi_{\vec{y}}\, \text {and } \Psi_{\vec{x}}(i) = \Psi_{\vec{x}}(i+1);\\
		-\Delta/2 & \text{ if }\Psi_{\vec{x}} = \Psi_{\vec{y}}\, \text {and } \Psi_{\vec{x}}(i) \neq \Psi_{\vec{x}}(i+1);\\ 
		1 &\text{ if } w_i(\Psi_{\vec{x}})=  \Psi_{\vec{y}}; \\ 
		0 & \text{otherwise}. 
	\end{array}\right. 
\end{align}
The Hamiltonian $H$ is defined by 
\[
H:= \displaystyle \sum_{i=1}^N H_i \, .
\]

\begin{theorem}\label{thm:XXZ}
	Fix $\De < 1$ and $n \leq N/2$. 
	Assume $(p_1, p_2, \dots, p_n) \in \mathcal{D}_\Delta^n$ are distinct and satisfy~\eqref{eq:BA}
	and let $\psi$ be defined as in Theorem~\ref{thm:BA}. Then
	\[
	H \psi = E \psi \, ,
	\]
	where
	\[
	E =  \frac{N \Delta}{2} - 2 \sum_{k=1}^n [\Delta - \cos(p_k)] \, .
	\]
\end{theorem}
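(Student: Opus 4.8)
The plan is to verify the eigenvalue equation $H\psi = E\psi$ directly, by evaluating both sides on each basis vector $\Psi_{\vec y}$ and reusing the two algebraic relations \eqref{eq:fundamental} and \eqref{eq:fundamentalBoundary} already established for the proof of Theorem~\ref{thm:BA}. First I would record the action of $H$ on a basis configuration. Reading off \eqref{eq:H_i}, the diagonal part of $H$ contributes $\tfrac{\Delta}{2}(N - 2d(\vec y))$, where $d(\vec y)$ is the number of edges of $\mathbb{Z}_N$ across which the arrows of $\Psi_{\vec y}$ disagree, while the off-diagonal part contributes $\sum_i \psi(w_i\vec y)$, where $w_i \vec y$ is the configuration obtained by swapping the arrows across the edge $\{i,i+1\}$ and the sum runs over those edges on which the two arrows differ. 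Thus $(H\psi)(\vec y)$ is a sum of a diagonal term and nearest-neighbour hopping terms, and the whole computation reduces to comparing this with $E\,\psi(\vec y)$.

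The second step is the ``bulk'' computation. Suppose $\vec y = (y_1 < \dots < y_n)$ has all its particles isolated and away from the boundary, i.e. $y_{k+1} > y_k + 1$ for all $k$ and no occupied site adjacent to another across the periodic boundary. Then $d(\vec y) = 2n$, every one of the $2n$ single-step hops lands on a legal (still ordered) configuration, and each such hop multiplies the plane wave $\prod_k e^{ip_{\sigma(k)}y_k}$ by $e^{\pm i p_{\sigma(k)}}$. Summing, the hopping terms produce the factor $2\sum_k \cos(p_k)$, which is independent of $\sigma$ and hence factors through $\psi(\vec y)$, while the diagonal produces $\tfrac{\Delta}{2}(N - 4n)$. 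Their sum is exactly $E = \tfrac{N\Delta}{2} - 2\sum_k[\Delta - \cos(p_k)]$, so the eigenvalue equation holds at every bulk configuration, and this is where the value of $E$ is read off.

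It remains to treat the two kinds of ``defect'' configurations, and this is where the work lies. When two particles are adjacent, $y_{k+1} = y_k + 1$, two of the hops counted in the bulk are forbidden and the domain-wall count drops by two; collecting the resulting correction, the discrepancy between $(H\psi)(\vec y)$ and $E\psi(\vec y)$ is, for each pair $\sigma,\ \sigma\circ(k,k+1)$, a two-body expression in $p_{\sigma(k)}, p_{\sigma(k+1)}$ of the form $2\Delta - e^{ip_{\sigma(k)}} - e^{-ip_{\sigma(k+1)}}$ weighted by $A_\sigma$. A short manipulation rewrites this factor in terms of $S$, and the pair of contributions cancels precisely because $A_{\sigma\circ(k,k+1)}/A_\sigma$ has the value dictated by \eqref{eq:fundamental}; note that the relevant two-body function is governed by the same $\Delta$ and the same $S$ as in the six-vertex case, so no new identity is required. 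The boundary defects---wrap-around hops of the particle at site $1$ or at site $N$---are handled analogously: reordering the particle that crosses the boundary introduces the cyclic transposition $(1,n)$ together with factors $e^{\pm iNp}$, and the corresponding terms cancel by the boundary relation \eqref{eq:fundamentalBoundary}, which is itself equivalent to the Bethe equations \eqref{eq:BA}.

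The main obstacle is the bookkeeping: carefully isolating the defect terms at adjacencies and at the boundary and matching them to the ratios in \eqref{eq:fundamental} and \eqref{eq:fundamentalBoundary}. Because $H$ involves only nearest-neighbour hopping (a single particle moving one step), this accounting is strictly simpler than for $V$, which connects configurations at arbitrary interlaced distance; in effect the XXZ computation is a lighter version of the proof of Theorem~\ref{thm:BA} using identical algebraic input. Finally I would remark that, in contrast with Theorem~\ref{thm:BA}, no separate treatment of the singular case $p_\ell = 0$ is needed here: the eigenvalue $E$ depends on the $p_k$ only through $\cos(p_k)$, which is analytic at $0$, and the hopping coefficients of $H$ carry no factor that degenerates as $p_\ell \to 0$.
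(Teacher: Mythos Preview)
Your approach is correct and is in fact the classical coordinate Bethe ansatz computation for the XXZ chain, but it is \emph{not} the route taken in the paper. The paper proceeds indirectly: it first proves (Lemma~\ref{lemma:CommutingMatrices}) that $VH=HV$ by a combinatorial case analysis on sinks, sources and runs, then invokes Theorem~\ref{thm:BA} to conclude that $\psi$ must also be an eigenvector of $H$, and finally evaluates $(H\psi)(\vec y)$ at the single convenient configuration $\vec y=(2,4,\dots,2n)$ to read off $E$. Your proof instead checks $(H\psi)(\vec y)=E\,\psi(\vec y)$ at \emph{every} $\vec y$, handling bulk configurations by the free-particle computation and cancelling the adjacency and wrap-around defects pairwise via \eqref{eq:fundamental} and \eqref{eq:fundamentalBoundary}; no reference to $V$ or to Theorem~\ref{thm:BA} is needed.

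Each route has its merits. The paper's argument exposes the structural link between the two models through the commutation $[V,H]=0$, and reduces the eigenvalue computation to a single coordinate; on the other hand it tacitly uses that an eigenvector of $V$ is automatically an eigenvector of $H$, which strictly speaking requires either simplicity of the $\Lambda$-eigenspace or an extra word of justification. Your direct argument sidesteps this point entirely, is self-contained, and---as you note---needs no special treatment of the singular case $p_\ell=0$, since $E$ and the two-body cancellation depend analytically on the $p_k$. The bookkeeping you flag (multiple simultaneous adjacencies, and the reindexing when a particle hops across the seam $N\!\leftrightarrow\!1$) is genuine but routine: adjacency defects at distinct pairs decouple, and the boundary reordering is exactly the change of variables of Proposition~\ref{prop:changeOfVariables}.
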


This result is simpler to prove once we know that $\psi$ is an eigenvector of $V^{(n)}$, rather than directly:
we will simply show that $H$ and $V^{(n)}$ commute, and therefore share eigenvectors. 
The exact value of $E$ will appear through direct computation.

\paragraph*{Organisation of the paper}
Section~\ref{sec:BA} contains the proof of Theorem~\ref{thm:BA}, the main result of this paper. In Section~\ref{sec:V} we prove Proposition~\ref{prop:transfer-matrix} on the structure and role of the transfer matrix. 
Finally, in Section~\ref{sec:XXZ}, we prove Theorem~\ref{thm:XXZ}, the equivalent of Theorem~\ref{thm:BA} for the XXZ model.

\paragraph*{Acknowledgements}
We thank A. Borodin, I. Corwin and C. Hagendorf for useful comments.
This research was supported by the NCCR SwissMAP, the ERC AG COMPASP, the Swiss NSF and the IDEX chair funded by  Paris-Saclay.

\section{Proof of the Bethe ansatz (Theorem~\ref{thm:BA})}\label{sec:BA}

We begin by introducing some useful notation. 
For the entire section, fix $n \leq N/2$ and some $(p_1, p_2, \dots, p_n) \in \mathcal{D}_\Delta^n$ where the $p_i$ are distinct. 
Set
\[
z_j = e^{i p_j} \in \{z \in \mathbb{C} : |z| = 1\}, \qquad \forall j \in \{1,\dots, n\}.
\]
Given $\sigma\in \mathfrak S_n$ and a vector $\vec x$, set  
\begin{equation}
  \label{eq:1}
  Z_\si^{\vec x}:=\prod_{j=1}^{n}z_{\si(i)}^{x_j}.  
\end{equation}
Also fix for the whole section a vector $\vec{x}$ with $|\vec x|=n$. 
%
Recall the definition of $\psi$ from the statement of the theorem.
Using the notation introduced above and the definition of interlacement, the coordinate $V \psi(\vec x)$ of the vector $V\psi$ along $\Psi_{\vec x}$  can be written as  
\begin{align}
	V \psi(\vec x)
	 =
	&\sum_{\sigma\in\mathfrak S_n}A_\sigma\sum_{1\le y_1\le x_1\le\cdots\le y_n\le x_n} 
		c^{ P(\Psi_{\vec{x}},\Psi_{\vec{y}}) } Z_{\si}^{\vec y}~+\sum_{\sigma\in\mathfrak S_n}A_\sigma\sum_{x_1\le y_1 \le \cdots\le x_n\le y_n\le N} c^{ P(\Psi_{\vec{x}},\Psi_{\vec{y}}) } Z_{\si}^{\vec y}.
	\label{eq:V_on_phi}
\end{align}
Note that the weight of $\vec{y}=\vec{x}$ is split up over both sums. Also keep in mind that the sums are on $\vec{y}=(y_1,\dots,y_n)$, where the $y_i$ are distinct and ordered.

One needs to show that the expression above is equal to $\Lambda \psi(\vec x)$. 
Our proof is organized as follows.
\begin{itemize}
\item In Section~\ref{sec:relat-satisf-a_sigma}, we state a lemma that provides several important relations satisfied by the coefficients $A_\si$, which will be used later in the proof.
\item In Section~\ref{sec:relations}, we provide a change of variables formula based on Bethe's equations. This formula allows us to ``merge'' the two terms in~\eqref{eq:V_on_phi} and provides us with the compact expression
\begin{equation}
  V\psi(\vec x)=\sum_{\si\in\mathfrak S_n} A_\si R_\si,\label{eq:4}
\end{equation}
where $R_\si$ will be defined as a sum suitable for further manipulations.

\item In Section~\ref{sec:encodWords}, we perform certain algebraic manipulations in order to rewrite the sum on the RHS of~\eqref{eq:4}. More precisely, we define a set of words $\mathcal W$ as well as functions $r_\si, Z_\si : \calW \to \bbC$ (defined in terms of the functions $L$, $M$ and the $z_i$'s) and prove an expression of the form
\begin{equation}\label{eq:2}
R_\si =\sum_{w \in \mathcal W}r_\si(w) Z_\sigma(w).
\end{equation}
The only requirement for this re-encoding step will be that none of the $z_i$'s is equal to 1 (or, equivalently, that none of the $p_i$'s is equal to $0$). This requirement is important when computing partial sums of the form  $\sum_{x_i\le y \le x_i}z_{\si(i)}^y$. 

\item In Section~\ref{sec:proofNonDegenerate}, we prove that $V \psi(\vec x)=\Lambda\psi(\vec x)$ in the non-singular case, when none of the $p_i$'s is equal to $0$. 
In this case the two steps~\eqref{eq:4} and~\eqref{eq:2} are  valid and we can write
\begin{equation*}
  V \psi(\vec x)=\sum_{w \in \mathcal W} \sum_{\si\in\mathfrak S_n} A_\sigma r_\si(w) Z_\sigma(w).
\end{equation*}
We conclude the proof by showing that for any non-constant $w \in \mathcal{W}$, 
\begin{equation}\label{eq:CancellationNonsingular}
\sum_{\si\in\mathfrak S_n} A_\sigma r_\si(w) Z_\sigma(w) = 0.
\end{equation}
The remaining terms corresponding to constant words will be equal to $\Lambda\psi(\vec x)$.       
\item In Section~\ref{sec:proofDegenerate}, we treat the singular case when one entry $p_\ell=0$. In this case, the encoding with words~\eqref{eq:2} is not valid directly (since $z_\ell=1$). Nevertheless, we will be able to perform a perturbative strategy, and write 
\begin{equation*}
  V \psi(\vec x)=\lim_{\eps\to 0}\sum_{w \in \mathcal W} \sum_{\si\in\mathfrak S_n} A_\sigma r_\si^\eps(w) Z^\eps_\sigma(w)
\end{equation*} 
where $r_\si^\eps$ and $Z_\si^\eps$ are defined by replacing $p_\ell(=0)$ with $\eps$ in the definition of $r_\si$ and $Z_\si$.
When analysing the contribution (as $\eps\rightarrow0$) of words in $\mathcal W$, we will need to keep track of the first order terms (terms of order $\eps$) which compensate diverging terms of the form $L(e^{i\eps})$ or $M(e^{i\eps})$ and do not vanish in the limit: this explains the different expression of $\Lambda$ when one of the $p_i$'s is 0.
\end{itemize}

\subsection{Relations satisfied by the coefficients $A_\sigma$}
\label{sec:relat-satisf-a_sigma}

The coefficients $A_\si$ defined in~\eqref{eq:A_si} play an important role in our proof. In order to perform algebraic manipulations, one needs to express $A_{\si \circ \si'}$ as a function of $A_\si$ for certain permutations $\si'$. Furthermore, the coefficients $A_\sigma$ are related to the functions $L$ and $M$ introduced in Section~\ref{sec:stat-bethe-ansatz}. In the lemma below, we state the relations needed for our derivation of the Bethe ansatz.

Let $\tau$ be the permutation with $\tau(i) = i+1$ for $1 \leq i < n$ and $\tau(n) = 1$. 
Moreover, let $(j,k)$ be the transposition inverting the elements $j$ and $k$. 

\begin{lemma}\label{lem:ratios} 
Assume that $(p_1,\ldots,p_n)\in\mathcal D_{\Delta}^n$ satisfies Bethe's equations \eqref{eq:BA}, and let $A_\si$ be the coefficients defined by~\eqref{eq:A_si}. Then, for every $\sigma \in \mathfrak S_n$, we have
\begin{align}
 \frac{A_{\sigma\circ (j,j+1)}}{A_\sigma}&=-\exp\big(i\Theta(p_{\sigma(j)},p_{\sigma(j+1)})\big) \quad\text{for every $1\le j<n$,}\label{eq:8}
\\
\frac{A_{\sigma\circ (n,1)}}{A_\sigma}&= -\left(\frac{z_{\sigma(n)}}{z_{\sigma(1)}}\right)^{N} \cdot\exp\big( i\Theta(p_{\sigma(n)},p_{\sigma(1)})\big),\label{eq:9}\\
\frac{A_{\sigma\circ\tau}}{A_\sigma}&= z_{\si(1)}^{-N}.\label{eq:11}
\end{align}
Furthermore, when  $p_j$ and $p_k$ are nonzero, 
\begin{equation}
  \label{eq:6}
  \exp\big(i\Theta(p_{j},p_{k})\big)=\frac{M(z_j)L(z_k)-1}{M(z_k)L(z_j)-1}.
\end{equation}
\end{lemma}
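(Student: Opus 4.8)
The plan is to establish the four identities in order of increasing reliance on Bethe's equations, proving the purely algebraic ones first and feeding them into those that need \eqref{eq:BA}. I would begin with \eqref{eq:8}, which follows directly from the definition \eqref{eq:A_si} and does not use \eqref{eq:BA}. Writing $\sigma' = \sigma\circ(j,j+1)$, only three things change in passing from $A_\sigma$ to $A_{\sigma'}$: the signature acquires a factor $-1$; among the factors $S(p_{\sigma(k)},p_{\sigma(\ell)})$, every factor pairing a position $m\notin\{j,j+1\}$ with $j$ or $j+1$ occurs in a pair that is merely permuted, so the only genuinely altered factor is $S(p_{\sigma(j)},p_{\sigma(j+1)})$, replaced by $S(p_{\sigma(j+1)},p_{\sigma(j)})$; and the prefactor $\prod_{1\le k<\ell\le n}e^{ip_{\sigma(k)}} = \prod_{k}e^{ip_{\sigma(k)}(n-k)}$ contributes the ratio $e^{i(p_{\sigma(j+1)}-p_{\sigma(j)})}$. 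Collecting these and comparing with the defining relation $\exp(-i\Theta(x,y)) = e^{i(x-y)}S(x,y)/S(y,x)$ yields \eqref{eq:8}.

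Next I would prove \eqref{eq:11}, where \eqref{eq:BA} enters. The key is the factorization $\tau = (1,2)\circ(2,3)\circ\cdots\circ(n-1,n)$, which realizes $\sigma\circ\tau$ by bubbling the value $\sigma(1)$ rightward one slot at a time. Telescoping $A_{\sigma\circ\tau}/A_\sigma$ through these $n-1$ adjacent transpositions and applying \eqref{eq:8} at each step — noting that at the $k$-th step the two relevant momenta are $p_{\sigma(1)}$ and $p_{\sigma(k+1)}$ — gives
\[
\frac{A_{\sigma\circ\tau}}{A_\sigma} = (-1)^{n-1}\exp\Big(i\sum_{m=2}^n\Theta(p_{\sigma(1)},p_{\sigma(m)})\Big).
\]
Using antisymmetry of $\Theta$ (so that $\Theta(p_{\sigma(1)},p_{\sigma(1)})=0$) to extend the sum to $m=1$, and then invoking \eqref{eq:BA} for the index $j=\sigma(1)$ to rewrite the exponential as $(-1)^{n-1}z_{\sigma(1)}^{-N}$, leaves exactly $z_{\sigma(1)}^{-N}$.

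For \eqref{eq:9} I would avoid decomposing the non-adjacent transposition $(1,n)$ directly and instead use the conjugation identity $(1,n) = \tau\circ(n-1,n)\circ\tau^{-1}$, valid because $\tau(n-1)=n$ and $\tau(n)=1$. Expanding $A_{\sigma\circ(1,n)}/A_\sigma$ as a product of three telescoping ratios, the outer two are handled by \eqref{eq:11} and its inverse form $A_{\xi\circ\tau^{-1}}/A_\xi = z_{\xi(n)}^{N}$ (obtained by applying \eqref{eq:11} with $\rho=\xi\circ\tau^{-1}$), contributing $z_{\sigma(n)}^{N}$ and $z_{\sigma(1)}^{-N}$, while the middle ratio is an application of \eqref{eq:8} to the adjacent transposition $(n-1,n)$ at the permutation $\sigma\circ\tau$, contributing $-\exp(i\Theta(p_{\sigma(n)},p_{\sigma(1)}))$. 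Multiplying the three factors gives \eqref{eq:9}.

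Finally, \eqref{eq:6} is a self-contained algebraic identity needing only $z_j,z_k\neq1$ so that $L,M$ are defined. Substituting $c^2 = 2(1-\Delta)$ into \eqref{eq:LMRhoDef}, I would expand $M(z_j)L(z_k)-1$ over the common denominator $(1-z_j)(1-z_k)$ and simplify the numerator to $-c^2(1 - 2\Delta z_k + z_j z_k)$; the term $M(z_k)L(z_j)-1$ is the same with $j,k$ swapped, so the denominators cancel in the ratio. Separately, expanding $\exp(i\Theta(p_j,p_k)) = e^{i(p_k-p_j)}S(p_k,p_j)/S(p_j,p_k)$ and clearing $z_j^{-1},z_k^{-1}$ shows it equals $(1 - 2\Delta z_k + z_j z_k)/(1 - 2\Delta z_j + z_j z_k)$, matching the ratio just obtained. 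The main obstacle is the bookkeeping in the telescoping arguments for \eqref{eq:11} and \eqref{eq:9}: one must track precisely which momenta appear as the two arguments of $\Theta$ at each elementary swap, and confirm that the accumulated sign $(-1)^{n-1}$ combines correctly with the sign in \eqref{eq:BA}. Once the telescoping is set up carefully and the conjugation identity for $(1,n)$ is in hand, the remaining steps — including the verification of \eqref{eq:6} — are routine.
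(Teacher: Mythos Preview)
Your proposal is correct and follows essentially the same approach as the paper: \eqref{eq:8} and \eqref{eq:6} are derived directly from the definitions, \eqref{eq:11} is obtained by telescoping through the decomposition $\tau=(1,2)\circ\cdots\circ(n-1,n)$ and invoking \eqref{eq:BA}, and \eqref{eq:9} is deduced by writing $(1,n)$ as a conjugate of an adjacent transposition by $\tau$. The only cosmetic difference is that you use $(1,n)=\tau\circ(n-1,n)\circ\tau^{-1}$ whereas the paper uses the equivalent $(n,1)=\tau^{-1}\circ(1,2)\circ\tau$; both conjugations lead to the same three-ratio telescoping and identical final factors.
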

\begin{remark}
  Equations~\eqref{eq:8} and~\eqref{eq:6} do not use that the $p_k$'s are solutions of Bethe's equations. 
\end{remark}

\begin{proof}
Equations~\eqref{eq:8} and~\eqref{eq:6} are straightforward consequences of the definitions of $A_\si$ and $\Theta$. In order to prove~\eqref{eq:11}, we use the transposition decomposition $\tau=(1,2)\circ\dots\circ(n-1,n)$ and apply~\eqref{eq:8} $n-1$ times to deduce
\begin{align*}
	\frac{ A_{(\sigma \circ \tau)}}{A_\sigma}& = (-1)^{n-1}\cdot\exp\left(i \sum_{k=2}^n \Theta\left(p_{\sigma(1)},p_{\sigma(k)}\right)\right)= (-1)^{n-1}\cdot\exp\left(i \sum_{k=1}^n \Theta\left(p_{\sigma(1)},p_{k}\right)\right).
\end{align*}
In the last equality, we used the fact that $\Theta(p,p)=0$ to add the missing term in the sum. Therefore,
\begin{equation*}
\frac{A_{(\sigma \circ \tau)} z_{\sigma(1)}^N}{A_\sigma}  = (-1)^{n-1}\exp\left(i p_{\sigma(1)} N + i \sum_{k=1}^n \Theta\left(p_{\sigma(1)},p_{k}\right)\right)  
\stackrel{\eqref{eq:BA}}{=}1,
\end{equation*}
which proves~\eqref{eq:11}.

Finally, we deduce~\eqref{eq:9} from the previous computations by using the decomposition $(n,1) = \tau^{-1}\circ(1,2)\circ\tau$. We find
\begin{align*}
\frac{A_{\sigma\circ (n,1)}}{A_\sigma} & = \frac{A_{\sigma\circ \tau^{-1}}}{A_\sigma} \cdot \frac{A_{\sigma\circ \tau^{-1} \circ (1,2)}}{A_{\sigma\circ \tau^{-1}}} \cdot \frac{A_{\sigma\circ \tau^{-1} \circ (1,2) \circ \tau}}{A_{\sigma\circ \tau^{-1} \circ (1,2)}} \\ 
& = z_{\sigma(n)}^N \cdot\left[ -\exp\big(i\Theta(p_{\sigma\circ\tau^{-1}(1)},p_{\sigma\circ\tau^{-1}(2)})\big) \right] \cdot z_{(\sigma \circ \tau^{-1} \circ (1,2))(1)}^{-N} \\
 & =  -\left(\frac{z_{\sigma(n)}}{z_{\sigma(1)}}\right)^{N} \cdot \exp\big(- i\Theta(p_{\sigma(1)},p_{\sigma(n)})\big)\\
& =  -\left(\frac{z_{\sigma(n)}}{z_{\sigma(1)}}\right)^{N} \cdot \exp\big(  i\Theta(p_{\sigma(n)},p_{\sigma(1)})\big),
\end{align*}
where in the last line we used the antisymmetry of $\Theta$. 
\end{proof}

\subsection{Toroidal boundary conditions}\label{sec:relations}

As mentioned in the first comment of Section~\ref{sec:rr}, Bethe's equations~\eqref{eq:BA} implies the  important ``boundary relation''~\eqref{eq:fundamentalBoundary} between the coefficients $A_\sigma$'s. This relation  will allow us to perform a change of variables, stated in the proposition below, that will be instrumental in our proof. 

To express this formula in a compact way, set $x_0 = x_N - N \leq 0$. Recall that
\begin{equation}\label{eq:abc}
	c^{P(\Psi_{\vec x},\Psi_{\vec y})}=\prod_{k=1}^n c^{2\mathbf 1_{y_k\notin \{x_{k-1},x_k\}}}
\end{equation}
for any $\vec y$ interlaced with $\vec x$. 
We extend this formula to all sets $(y_1,\dots, y_n)$ with $x_0\le y_1\le x_1\le \dots\le x_{n-1}\le y_n\le x_n$. 
Henceforth $\vec y$ is considered to satisfy the more relax condition above; in particular, we may have $y_1 < 0$ 
and $y_j = y_{j+1}$ for certain $j$'s.

Recall that $\tau$ is the cyclic permutation of $\{1,\dots,n\}$ defined by $\tau(n)=1$ and $\tau(i)=i+1$ for each $1\leq  i< n$. 

\begin{proposition}[Change of variables formula]\label{prop:changeOfVariables} 
	Assume that  $(p_1,\ldots,p_n)\in\mathcal D_{\Delta}^n$ satisfies Bethe's equations~\eqref{eq:BA}.
	For any function $f:\mathfrak S_n\rightarrow \bbR$, 
	\begin{equation}\label{eq:5} 
		\sum_{\sigma\in\mathfrak S_n}A_\sigma\, f(\sigma)\, Z_\si^{\vec x}
		=\sum_{\sigma\in\mathfrak S_n} A_\sigma\, f(\sigma\circ\tau)\, Z_{\si}^{\tau^{-1}\vec x},
	\end{equation}
	where $\tau^{-1}\vec x:= (x_0,x_1,\ldots,x_{n-1})$. 
\end{proposition}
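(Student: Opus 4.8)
The plan is to prove the identity by a single change of the summation variable, reducing it to a term-by-term algebraic identity that follows directly from relation~\eqref{eq:11} of Lemma~\ref{lem:ratios} together with an explicit computation of the monomials $Z_\sigma^{\vec x}$. First I would rewrite the right-hand side of~\eqref{eq:5} by substituting $\sigma \mapsto \sigma \circ \tau^{-1}$ in the summation. Since right-multiplication by $\tau^{-1}$ is a bijection of $\mathfrak S_n$, and since $\tau^{-1}\circ \tau = \mathrm{id}$ turns $f(\sigma\circ\tau^{-1}\circ\tau)$ back into $f(\sigma)$, this yields
\[
\sum_{\sigma \in \mathfrak S_n} A_\sigma\, f(\sigma \circ \tau)\, Z_\sigma^{\tau^{-1}\vec x} = \sum_{\sigma \in \mathfrak S_n} A_{\sigma \circ \tau^{-1}}\, f(\sigma)\, Z_{\sigma \circ \tau^{-1}}^{\tau^{-1}\vec x}.
\]
Comparing the coefficients of $f(\sigma)$ on both sides, and noting that $f$ is arbitrary, the proposition reduces to the single identity
\[
A_\sigma\, Z_\sigma^{\vec x} = A_{\sigma \circ \tau^{-1}}\, Z_{\sigma \circ \tau^{-1}}^{\tau^{-1}\vec x} \qquad \text{for every } \sigma \in \mathfrak S_n.
\]

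Next I would treat the two factors separately. For the coefficients, I apply~\eqref{eq:11} with $\sigma$ replaced by $\sigma \circ \tau^{-1}$. Since $\tau^{-1}(1) = n$, we have $(\sigma \circ \tau^{-1})(1) = \sigma(n)$, so~\eqref{eq:11} gives $A_\sigma / A_{\sigma \circ \tau^{-1}} = z_{\sigma(n)}^{-N}$, that is, $A_{\sigma \circ \tau^{-1}} = A_\sigma\, z_{\sigma(n)}^{N}$. For the monomials, I compute $Z_{\sigma \circ \tau^{-1}}^{\tau^{-1}\vec x}$ directly from the definition~\eqref{eq:1}: using $\tau^{-1}(1)=n$ and $\tau^{-1}(j)=j-1$ for $2 \le j \le n$, together with the explicit coordinates $\tau^{-1}\vec x = (x_0, x_1, \ldots, x_{n-1})$, the product reindexes to $z_{\sigma(n)}^{x_0}\prod_{k=1}^{n-1} z_{\sigma(k)}^{x_k}$. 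Substituting the wrap-around convention $x_0 = x_n - N$ then extracts exactly a factor $z_{\sigma(n)}^{-N}$, giving $Z_{\sigma \circ \tau^{-1}}^{\tau^{-1}\vec x} = z_{\sigma(n)}^{-N}\, Z_\sigma^{\vec x}$.

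Finally, multiplying these two computations, the factors $z_{\sigma(n)}^{N}$ and $z_{\sigma(n)}^{-N}$ cancel, so $A_{\sigma \circ \tau^{-1}} Z_{\sigma \circ \tau^{-1}}^{\tau^{-1}\vec x} = A_\sigma Z_\sigma^{\vec x}$, which is the required identity and hence proves the proposition.

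The main (and essentially only) obstacle is bookkeeping: one must correctly track how $\tau^{-1}$ acts simultaneously on the permutation index (shifting $\sigma(j)$ to $\sigma(j-1)$ and moving the first slot to $\sigma(n)$) and on the coordinate vector, and then verify that the single wrapped coordinate $x_0 = x_n - N$ is precisely what converts the $z_{\sigma(n)}^{N}$ produced by~\eqref{eq:11} into its inverse. It is worth emphasizing that~\eqref{eq:11} is the place where Bethe's equations~\eqref{eq:BA} enter the argument, so that this cancellation — and hence the whole change of variables — is the algebraic manifestation of the toroidal boundary conditions.
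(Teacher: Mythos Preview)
Your proof is correct and takes essentially the same approach as the paper: a single bijective change of summation variable, followed by the identity~\eqref{eq:11} and a direct computation of the monomial $Z$. The only cosmetic difference is that the paper substitutes $\sigma\mapsto\sigma\circ\tau$ on the left-hand side (producing a factor $z_{\sigma(1)}^{-N}$ from~\eqref{eq:11} and $z_{\sigma(1)}^{N}$ from the monomial), whereas you substitute $\sigma\mapsto\sigma\circ\tau^{-1}$ on the right-hand side (producing $z_{\sigma(n)}^{\pm N}$ instead); the cancellation is the same either way.
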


\begin{proof}
  By making the change of variables $\sigma\mapsto\sigma\circ \tau$, the LHS of~\eqref{eq:5} is equal to
\begin{equation}\label{eq:CyclicRewrite}
  \sum_{\sigma\in\mathfrak S_n} A_{(\sigma\circ \tau)}  f(\sigma\circ \tau) \:  Z_{(\si\circ \tau)}^{\vec x}.
\end{equation}
Then, ~\eqref{eq:11} and  the straightforward computation $Z_{\si\circ \tau}^{\vec x} = z_{\si(1)}^N Z_\si^{\tau^{-1}\vec x}$ complete the proof.
\end{proof}

\begin{corollary}\label{cor:expressionRsi} If $p_1,\ldots,p_n$ are solutions of Bethe's equations \eqref{eq:BA}, then
  \begin{equation}\label{eq:lem0}
V \psi(\vec x)
	 ~=~
	\sum_{\sigma\in\mathfrak S_n}A_\sigma\underbrace{\sum_{x_0\le y_1\le x_1\le y_2\le\dots\le x_{n-1}\le y_n\le x_n} 
		c^{ P(\Psi_{\vec{x}},\Psi_{\vec{y}}) }Z_\si^{\vec y}}_{\displaystyle R_\si},
\end{equation}
where the second sum is such that the $y_i$'s must all be distinct modulo $N$. 
\end{corollary}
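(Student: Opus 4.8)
The plan is to start from the two-term expression~\eqref{eq:V_on_phi} for $V\psi(\vec x)$ and to show that the target sum $R_\si$, taken over the single ``merged'' range $x_0\le y_1\le x_1\le y_2\le\dots\le x_{n-1}\le y_n\le x_n$ with the $y_i$ distinct modulo $N$, splits naturally into the two contributions appearing in~\eqref{eq:V_on_phi}. Concretely, I would partition this range according to the sign of $y_1$: the sub-range with $y_1\ge 1$ forces $y_i\in\{1,\dots,N\}$, so it coincides term-by-term with the \emph{first} sum in~\eqref{eq:V_on_phi}, and nothing needs to be done there. All the work is therefore to identify the sub-range with $y_1\le 0$ with the \emph{second} sum in~\eqref{eq:V_on_phi}.

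For this I would introduce the cyclic reindexing $\vec y'\mapsto \tau^{-1}\vec y'=(y_n'-N,\,y_1',\dots,y_{n-1}')$, which is a bijection between the range $x_1\le y_1'\le x_2\le\dots\le x_n\le y_n'\le N$ of the second sum and the sub-range $\{y_1\le 0\}$ of the merged range (here $y_1=y_n'-N\le 0$ exactly because $y_n'\le N$). Two facts then combine. First, $\Psi_{\vec y'}$ and $\Psi_{\tau^{-1}\vec y'}$ are the same basis vector once positions are read modulo $N$, so by the extended weight formula~\eqref{eq:abc} their interlacement weights agree, $c^{P(\Psi_{\vec x},\Psi_{\vec y'})}=c^{P(\Psi_{\vec x},\Psi_{\tau^{-1}\vec y'})}$; this is also where the ``distinct modulo $N$'' condition enters, as it is precisely what guarantees $\tau^{-1}\vec y'$ is a genuine, non-degenerate configuration. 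Second, I would apply the change-of-variables identity of Proposition~\ref{prop:changeOfVariables} at the vector $\vec y'$ with $f\equiv 1$ — this is legitimate since the proof of that proposition only invokes~\eqref{eq:11} and the elementary relation $Z_{\si\circ\tau}^{\vec v}=z_{\si(1)}^N Z_\si^{\tau^{-1}\vec v}$, which hold for any vector $\vec v$, not merely the distinguished $\vec x$. This yields $\sum_\si A_\si Z_\si^{\vec y'}=\sum_\si A_\si Z_\si^{\tau^{-1}\vec y'}$ for each fixed $\vec y'$. Pulling the common scalar weight out and summing over the bijective ranges then converts the second sum of~\eqref{eq:V_on_phi} verbatim into the $\{y_1\le 0\}$ part of $\sum_\si A_\si R_\si$.

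Adding the two contributions recovers $V\psi(\vec x)=\sum_\si A_\si R_\si$ over the full merged range, which is the claim. I expect the main obstacle to be not the algebra — which is a one-line application of the already-established relation~\eqref{eq:11} — but the combinatorial bookkeeping at the boundary. In particular I would have to check carefully that the two sub-ranges tile the merged range without overlap or omission under the distinct-modulo-$N$ constraint, and that the diagonal is handled correctly: the term $\vec y=\vec x$ sits in the $\{y_1\ge1\}$ part (coming from the first sum), while its cyclic companion $\tau^{-1}\vec x=(x_0,x_1,\dots,x_{n-1})$ sits in the $\{y_1\le 0\}$ part (coming from the second sum); these two distinct lattice points represent the \emph{same} vector $\Psi_{\vec x}$ modulo $N$ and together reproduce the diagonal weight $2=V(\Psi_{\vec x},\Psi_{\vec x})$, consistently with the remark after~\eqref{eq:V_on_phi} that the weight of $\vec y=\vec x$ is split across both sums.
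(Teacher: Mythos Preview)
Your proposal is correct and follows essentially the same route as the paper: both arguments rest on applying the change-of-variables formula of Proposition~\ref{prop:changeOfVariables} with $f\equiv 1$ to each fixed $\vec y'$ in the second sum of~\eqref{eq:V_on_phi}, then reindexing via $\vec y'\mapsto\tau^{-1}\vec y'$ to land in the range $x_0\le y_1\le 0$ of the merged sum. Your explicit remark that Proposition~\ref{prop:changeOfVariables} applies to any vector (not just the distinguished $\vec x$), and your careful handling of the diagonal term, make explicit what the paper leaves implicit; the combinatorics of the boundary partition is exactly as you describe.
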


\begin{proof}
  Consider the two terms on the RHS of~\eqref{eq:V_on_phi}. By applying the change of variables formula of Proposition~\ref{prop:changeOfVariables} and reindexing $\vec y$, the second term is equal to
$$ \sum_{\sigma\in\mathfrak S_n}A_{\sigma} \sum_{x_0\le y_1\le 0 \le x_1\le y_2\le \dots\le x_{n-1}\le y_{n}\le x_n} 
c^{ P(\Psi_{\vec{x}},\Psi_{\vec{y}}) }Z_{\si(k)}^{\vec y}.$$ Then, combining this expression with the first term on the RHS of~\eqref{eq:V_on_phi} yields the desired expression for $(V\psi)(\vec x)$.
\end{proof}

\subsection{Encoding with words}

\label{sec:encodWords}

The goal of this section is to provide an alternate sum representation for $R_{\sigma}$. 
For this part, we do not assume that $(p_1,\dots, p_n)$ satisfy~\eqref{eq:BA}.

Computing directly $R_{\sigma}$ is rather cumbersome, due to the restriction forcing the $y_i$'s to be distinct. To illustrate the fact that the restriction on the $y_i$'s to be distinct creates the main difficulty, let us start by computing a slightly different quantity obtained by considering the sum $R_\sigma(\emptyset)$ (the notation will become clear later) of the expression $c^{ P(\Psi_{\vec{x}},\Psi_{\vec{y}}) }\prod_{k=1}^{n} z_{\si(k)}^{y_k}$ {\em for all} $y_1,\dots, y_n$ with $x_0\le y_1 \leq x_1 \leq y_2 \dots \leq x_n$ -- even when the terms $y_1,\dots,y_n$ are not distinct modulo $N$
(the notation $c^{ P(\Psi_{\vec{x}},\Psi_{\vec{y}})}$ in this case was defined in~\eqref{eq:abc}).
Recall that $\vec x$ is fixed and that the sum below is only on the $y_i$.
\begin{align}
	R_\sigma(\emptyset)&:=\sum_{x_0\leq y_1\leq x_1\leq y_2 \leq \dots \leq x_n} c^{ P(\Psi_{\vec{x}},\Psi_{\vec{y}}) }\   Z_{\si}^{\vec y} \nonumber\\
	&=\sum_{x_0\leq y_1\leq x_1\leq y_2 \leq \dots \leq x_n}  \prod_{k=1}^{n} c^{2\mathbf 1_{y_k\notin \{x_{k-1},x_k\}}}\,z_{\si(k)}^{y_k} \nonumber\\
	&= \sum_{x_0\leq y_1\leq x_1}c^{2\mathbf 1_{y_1\notin \{x_0,x_1\}}}\,z_{\si(1)}^{y_1}\sum_{x_1\leq y_2\leq x_2}c^{2\mathbf 1_{y_2\notin \{x_{1},x_2\}}}\,z_{\si(2)}^{y_2}\dots\sum_{x_{n-1}\leq y_n\leq x_n}	c^{2\mathbf 1_{y_n\notin \{x_{n-1},x_n\}}}\,z_{\si(n)}^{y_n} \nonumber \\
		&  = \prod_{k=1}^n \Big[ z_{\si(k)}^{x_{k-1}} +~  \sum_{x_{k-1} <y <x_k }  c^2z_{\si(k)}^{y} ~+ z_{\si(k)}^{x_{k}} \Big] \nonumber \\
			&=\prod_{k=1}^n \big[L(z_{\si(k)})z_{\si(k)}^{x_{k-1}} ~+~ M(z_{\si(k)}) z_{\si(k)}^{x_k}\big] \label{eq:EmptySetCalc}.
			\end{align}
The last equality is given by the definition of $L,M$, and the basic formula on geometric series. Note that this equality only holds if the values of $z_1,\dots, z_n$ are all different from $1$ - i.e. no $p_1,\dots, p_n$ is equal to $0$. 

Let $\mathcal W:=\{L,M\}^n$. An element of $\mathcal W$ is called a \emph{word} and is denoted by $w=w_1w_2\ldots w_n$. Expanding the product in~\eqref{eq:EmptySetCalc}, we get
\begin{align}\label{eq:aa}
R_\sigma(\emptyset)&=\sum_{w\in\mathcal W}\prod_{k:\, w_k = L} L(z_{\si(k)})z_{\si(k)}^{x_{k-1}} \cdot \prod_{k:\,w_k = M}M(z_{\si(k)})z_{\si(k)}^{x_k}.
\end{align}
\medbreak Importantly, the separation of the sums on the $y_i$ was possible because we dropped the restriction that $y_1,\dots, y_n$ be distinct. In the general case, this is not possible; however, we will still manage to express $R_\si$ using the strategy above via an inclusion-exclusion formula.

For $w \in \calW$, define
\begin{align}
  &r_\si(w) := \prod_{k\::\:w_kw_{k+1}=ML} (M(z_{\si(k)})L(z_{\si(k+1)})-1) \ \prod_{k\::\:w_{k-1}w_{k}=LL}  L(z_{\si(k)})  \prod_{k\::\:w_kw_{k+1}=MM}M(z_{\si(k)}), \label{eq:rSigW}  \\
&Z_\si(w):=\prod_{k:\, w_k = L} z_{\si(k)}^{x_{k-1}} \cdot \prod_{k:\,w_k = M}z_{\si(k)}^{x_k} \nonumber.
\end{align}
(Note that the indexes are $k-1$ and $k$ in the second product of the definition of $r_\sigma(w)$, and $k$ and $k+1$ in the third.) The next lemma shows that $R_\sigma$ itself can be written in terms of the quantities $r_\si(w)$ and $Z_\si(w)$.
\begin{lemma}\label{lem:1}
For any $\sigma\in \mathfrak{S}_n$ and any $p_1,\dots,p_n$ distinct and non-zero, 
$$R_\sigma=\sum_{w\in \mathcal W}r_\si(w) Z_\si(w).$$
\end{lemma}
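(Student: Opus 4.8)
The plan is to deduce the formula from the factorized expression for $R_\sigma(\emptyset)$ already established in~\eqref{eq:EmptySetCalc}--\eqref{eq:aa}, treating the distinctness constraint on the $y_i$'s (which is the only thing distinguishing $R_\sigma$ from $R_\sigma(\emptyset)$) by inclusion--exclusion. The starting observation is purely order-theoretic: in the range $x_0\le y_1\le x_1\le y_2\le\dots\le x_{n-1}\le y_n\le x_n$ one always has $y_k\le x_k\le y_{k+1}$, so two consecutive variables coincide only through $y_k=y_{k+1}=x_k$. Since $x_0=x_n-N$, the only coincidence modulo $N$ between non-consecutive variables is the ``wrap-around'' $y_1=x_0\equiv x_n=y_n$, so I will treat the junction index $k$ cyclically, letting $k=n$ denote this boundary coincidence between positions $n$ and $1$. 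Because $x_1<\dots<x_n$, a collision at junction $k$ and at junction $k+1$ would force $x_k=x_{k+1}$, which is impossible; hence collisions never chain, and the set of colliding junctions is always an independent set (a matching) on the cyclic index set $\{1,\dots,n\}$.

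First I would set up the inclusion--exclusion over the collision events $\{y_k=y_{k+1}\}$ (cyclically indexed), obtaining $R_\sigma=\sum_{T}(-1)^{|T|}W(T)$, where $T$ ranges over sets of junctions and $W(T)$ is the weighted sum, taken inside $R_\sigma(\emptyset)$, over configurations realizing all collisions indexed by $T$; by the previous paragraph $W(T)=0$ unless $T$ is independent. The key computation is that for an independent $T$ the quantity $W(T)$ factorizes: forcing $y_k=y_{k+1}=x_k$ for $k\in T$ leaves the remaining variables free in their intervals, each contributing its full bracket $L(z_{\sigma(j)})z_{\sigma(j)}^{x_{j-1}}+M(z_{\sigma(j)})z_{\sigma(j)}^{x_j}$ from~\eqref{eq:EmptySetCalc}, while each collided pair contributes weight $1$ (both $y_k=x_k$ and $y_{k+1}=x_{(k+1)-1}$ are endpoints, so the indicators in~\eqref{eq:abc} vanish) together with the monomial $z_{\sigma(k)}^{x_k}z_{\sigma(k+1)}^{x_{(k+1)-1}}$, where the convention $x_0=x_n-N$ is used when $k=n$.

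The final and most delicate step is to re-expand every free bracket into its $L$- and $M$-parts and to reorganize the doubly-indexed sum (over independent $T$, and over $L/M$ choices at the free positions) according to the word $w\in\mathcal W$ it produces. A collided junction $k\in T$ forces the letters $w_k=M$ and $w_{k+1}=L$, so $T$ is always a subset of the $ML$-junctions of the resulting $w$; conversely, since the $ML$-junctions of any word are automatically non-adjacent, every subset of them is an admissible (independent) $T$. Fixing $w$, the monomial produced is exactly $Z_\sigma(w)$, and collecting coefficients shows that each $ML$-junction at position $k$ contributes either its free value $M(z_{\sigma(k)})L(z_{\sigma(k+1)})$ (when $k\notin T$) or the collision value $-1$ (when $k\in T$), hence the factor $M(z_{\sigma(k)})L(z_{\sigma(k+1)})-1$; meanwhile every position lying in an $MM$ (resp.\ $LL$) pattern is free and contributes $M(z_{\sigma(k)})$ (resp.\ $L(z_{\sigma(k)})$). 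This is precisely the definition~\eqref{eq:rSigW} of $r_\sigma(w)$.

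\textbf{The main obstacle} is this final bookkeeping: one must verify that each of the $n$ letters of $w$ is accounted for exactly once across the three products in~\eqref{eq:rSigW}, and that the correspondence between $(T,\text{free choices})$ and $(w,\ T\subseteq ML\text{-junctions of }w)$ is consistent in both monomial and sign, with the wrap-around junction $k=n$ handled through $x_0=x_n-N$. To keep this argument honest I would first confirm the scheme on the degenerate case $n=1$ (where the constraint is vacuous, no $ML$-junction occurs, and $R_\sigma=R_\sigma(\emptyset)$) and on $n=2$ (where $T\in\{\emptyset,\{1\},\{2\}\}$ and the two $-1$ terms match the collision monomials of the words $ML$ and $LM$), before writing out the general case.
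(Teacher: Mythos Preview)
Your proposal is correct and follows essentially the same route as the paper: the paper also performs inclusion--exclusion over the ``collision'' events $\{y_k=x_k=y_{k+1}\}$ (its $R_\sigma(S)$ is your $W(T)$), observes that only independent sets of junctions contribute, factorizes the constrained sum into free brackets times degenerate monomials, and then re-expands over words $w$ with the convention $w_k=M$, $w_{k+1}=L$ at each forced junction before swapping the order of summation. Your ``main obstacle'' is exactly the bookkeeping the paper dispatches in its final displayed expansion of $r_\sigma(w)$.
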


\begin{proof}[Lemma~\ref{lem:1}]
For $S\subset\{1,\dots,n\}$, introduce
$$R_\sigma(S):=\sum_{\substack{ x_0\leq y_1\leq x_1\leq y_2 \leq \dots  \leq x_n \\i \in S \Rightarrow  y_{i} = x_i = y_{i+1} }} c^{ P(\Psi_{\vec{x}},\Psi_{\vec{y}}) }\ Z^{\vec y}_\si,$$
where $y_{n+1} = y_1 + N$. The definition is coherent with the quantity $R_\sigma(\emptyset)$ introduced before the lemma. Note that $R_\sigma(S)=0$ as soon as $S$ contains two successive integers (with $n+1$ being identified with 1) since $x_i\ne x_{i+1}$.
Thus, we will assume henceforth that $S$ contains no two two successive integers. 
With this notation, the inclusion exclusion formula reads:
\begin{align}\label{eq:RS}
	R_{\sigma} = \displaystyle \sum_{S \subset\{1,\dots,n\}} (-1)^{|S|} R_{\sigma}(S) \, .
\end{align}
Now, the computation leading to~\eqref{eq:EmptySetCalc} can be repeated for $S\neq \emptyset$ to yield
\begin{align}\label{eq:ArbitrarySetCalc}
R_\sigma(S) =\prod_{k:\{k-1,k\}\cap S=\emptyset}^n \big[L(z_{\si(k)})z_{\si(k)}^{x_{k-1}} ~+~ M(z_{\si(k)}) z_{\si(k)}^{x_k}\big]\  \prod_{k \in S} z_{\si(k)}^{x_k}\ \prod_{k\::\: k-1 \in S}z_{\si(k)}^{x_{k-1}}.
\end{align}
This is because, whenever $k \in S$, the sums over $y_k$ and $y_{k+1}$ are degenerate, including only one term - namely $z_{\si(k)}^{x_k}$. (The condition $k-1\in S$ in the last product corresponds to the fact that $y_{k+1}$ is equal to $x_k$ when $k\in S$.) Meanwhile, unrestricted $y_k$'s result in geometric sums, as before. 

Fix $S \subset \{1,\dots,n\}$ with no two consecutive values when considered periodically.
As in~\eqref{eq:aa}, we may expand the first product in terms of words $w = w_1\dots w_n$. 
However, only the choices of letters $w_i$ with $i \notin S$ and $i-1 \notin S$ matter. 
Thus, we expand $R_\si(S)$ using words $w \in \{L,M\}^n$, with the restriction that $w_k=M$ and $w_{k+1}=L$ for $k\in S$;
the choice of $w_k$ when $\{k-1,k\}\cap S=\emptyset$ is free and indicates whether we pick the term $L(z_{\sigma(k)})z_{\sigma(k)}^{x_{k-1}}$ or $M(z_{\sigma(k)})z_{\sigma(k)}^{x_k}$ in the first product. 
For a word $w$, write $S(w)$ for the set of indices $k$ such that $w_kw_{k+1}=ML$. 
Then the above restriction may be written as $S(w)\supset S$. Therefore, 
\begin{align*}
   R_\sigma(S) &=  \sum_{\substack{w\in \mathcal W : \\ S(w)\supset S}}\ \prod_{\substack{k:\, w_k = L\\\{k-1,k\}\cap S=\emptyset}} L(z_{\si(k)})z_{\si(k)}^{x_{k-1}}  \prod_{\substack{k:\,w_k = M\\\{k-1,k\}\cap S=\emptyset}}M(z_{\si(k)})z_{\si(k)}^{x_k}\  \prod_{k \in S} z_{\si(k)}^{x_k} \ \prod_{k\::\: k-1 \in S}z_{\si(k)}^{x_{k-1}}\\
   &= \sum_{\substack{w\in \mathcal W:\\ S(w)\supset S}}\ \bigg[
   \prod_{\substack{k:\,w_k = L\\k-1\notin S}} L(z_{\si(k)})  
   \prod_{\substack{k:\,w_k = M\\k\notin S}}M(z_{\si(k)})\bigg] Z_\si(w).
\end{align*}
In the second line, we have used that the considered words satisfy $w_k=M$ and $w_{k+1}=L$ for all $k\in S$.
Plugging this expression in~\eqref{eq:RS} and interchanging the sums, we find
\begin{equation}
  \label{eq:15}
  R_\si=\sum_{w\in\mathcal W}\bigg[\sum_{S\subset S(w)}(-1)^{|S|} \prod_{\substack{k:\, w_k = L\\k-1\notin S}} L(z_{\si(k)})  \prod_{\substack{k:\,w_k = M\\k\notin S}}M(z_{\si(k)})\bigg] Z_\si(w),
\end{equation}
where $S$ ranges over sets with no two consecutive values. 
In order to conclude the proof, one needs to check that the term inside the brackets in the equation
above is equal to $r_\si(w)$. To see this, expand the first product in the definition of $r_\si(w)$ (see~\eqref{eq:rSigW}) in order
to obtain
\begin{align*}
   r_\si(w)&=  \sum_{S \subset S(w)} (-1)^{|S|} \prod_{k \in S(w) \setminus S} M(z_{\si(k)}) L(z_{\si(k+1)}) \prod_{k\::\:w_{k-1}w_{k}=LL}  L(z_{\si(k)})  \prod_{k\::\:w_kw_{k+1}=MM}M(z_{\si(k)}). 
\end{align*}
One may check that the expression above matches the bracketed term in~\eqref{eq:15}, and this completes the proof.
\end{proof}

\subsection{Proof of Theorem~\ref{thm:BA} when no entry is zero}\label{sec:proofNonDegenerate}
In this section assume $(p_1,\ldots,p_n)\in\mathcal D_{\Delta}^n$ satisfies the Bethe equations~\eqref{eq:BA} and that $p_k\neq0$ for every $k$. We will prove that $V\psi(\vec x)=\Lambda\psi(\vec x)$. From Corollary~\ref{cor:expressionRsi} and Lemma~\ref{lem:1} (which can be applied since the $p_k$'s are nonzero), we already know that

\begin{equation}
	V \psi(\vec x) = \sum_{w\in \mathcal W}\sum_{\sigma\in \mathfrak S_n}  A_\sigma r_\si(w)Z_\si(w).\label{eq:14}
\end{equation}

We begin with an important lemma, proving that the sum above has many cancellations and reduces to a sum over exactly two words:
\begin{lemma}\label{lem:2}
  Let $\mathcal W_0=\{L\cdots L, M\cdots M\}$ be the set of constant words. Then,
\[
V \psi(\vec x) = \sum_{w \in \mathcal W_0} \sum_{\sigma\in \mathfrak S_n} A_\sigma r_\si(w)Z_\si(w).
\]
\end{lemma}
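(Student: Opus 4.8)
The plan is to start from the expansion \eqref{eq:14} and show that every \emph{non-constant} word $w \in \mathcal{W}$ contributes nothing to the sum, i.e.\ to establish the cancellation \eqref{eq:CancellationNonsingular}; what then survives is exactly the pair of constant words in $\mathcal{W}_0$. The mechanism will be a sign-reversing involution on $\mathfrak{S}_n$: for each fixed non-constant $w$ I would exhibit a transposition $\rho$ (depending on $w$) such that the right-multiplication $\sigma \mapsto \sigma\circ\rho$ negates each summand $A_\sigma r_\sigma(w) Z_\sigma(w)$. Since $\rho$ is a fixed-point-free involution of $\mathfrak{S}_n$, the terms then cancel in pairs and the inner sum over $\sigma$ vanishes.

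To choose $\rho$, read $w = w_1\cdots w_n$ cyclically (with $w_{n+1}=w_1$). A non-constant word must possess at least one $ML$-transition, that is, an index $j$ with $w_j = M$ and $w_{j+1} = L$: going once around the cycle the maximal blocks of equal letters alternate, so $ML$- and $LM$-transitions occur equally often and hence at least once each. I would fix such a $j$ and set $\rho = (j,j+1)$, with the convention that $(n,n+1)$ denotes the transposition $(n,1)$.

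The heart of the argument is to track how $A_\sigma$, $r_\sigma(w)$ and $Z_\sigma(w)$ transform under $\sigma \mapsto \sigma\circ\rho$. For $Z_\sigma(w)$, positions $j$ and $j+1$ contribute $z_{\sigma(j)}^{x_j}$ and $z_{\sigma(j+1)}^{x_j}$ — both exponents equal $x_j$, since $w_j = M$ uses $x_j$ while $w_{j+1}=L$ uses $x_{(j+1)-1}=x_j$ — so their product is symmetric in $\sigma(j),\sigma(j+1)$ and $Z_\sigma(w)$ is invariant for interior $j$. For $r_\sigma(w)$, the crucial observation (which I would verify by a short case check against the definition \eqref{eq:rSigW}) is that the only factor involving the indices $j$ or $j+1$ is the single $ML$-factor $M(z_{\sigma(j)})L(z_{\sigma(j+1)})-1$ at position $j$: the neighbouring $LL$-, $MM$- and $ML$-factors all require letter patterns incompatible with $w_j = M,\ w_{j+1} = L$. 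Swapping $\sigma(j)$ and $\sigma(j+1)$ therefore multiplies $r_\sigma(w)$ by $\frac{M(z_{\sigma(j+1)})L(z_{\sigma(j)})-1}{M(z_{\sigma(j)})L(z_{\sigma(j+1)})-1} = \exp\!\big(-i\Theta(p_{\sigma(j)},p_{\sigma(j+1)})\big)$ by relation \eqref{eq:6}. Combined with \eqref{eq:8}, which gives $A_{\sigma\circ\rho}/A_\sigma = -\exp\!\big(i\Theta(p_{\sigma(j)},p_{\sigma(j+1)})\big)$, the three ratios multiply to $-1$, exactly as needed.

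The main obstacle, and the one place demanding genuine care, is the wrap-around case in which the only available $ML$-transition sits at $j=n$ (so $w_n = M$, $w_1 = L$) and $\rho = (n,1)$ is no longer adjacent. Here $Z_\sigma(w)$ is \emph{not} invariant: because $x_0 = x_n - N$, the swap multiplies it by $(z_{\sigma(n)}/z_{\sigma(1)})^{-N}$, and one must invoke \eqref{eq:9} rather than \eqref{eq:8} for the $A$-ratio. The point is that the extra factor $(z_{\sigma(n)}/z_{\sigma(1)})^{N}$ appearing in \eqref{eq:9} precisely cancels the factor coming from $Z_\sigma(w)$, so the product of the three ratios is again $-1$. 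Once this is checked, the involution argument applies uniformly, \eqref{eq:CancellationNonsingular} holds for every non-constant $w$, and the expansion \eqref{eq:14} collapses to the sum over $\mathcal{W}_0$, which is the claim.
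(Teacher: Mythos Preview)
Your proposal is correct and follows essentially the same approach as the paper: fix a non-constant word $w$, choose an index $m$ with $w_mw_{m+1}=ML$, and use the involution $\sigma\mapsto\sigma\circ(m,m+1)$ to pair terms with opposite sign, treating the boundary case $m=n$ via~\eqref{eq:9} instead of~\eqref{eq:8}. The paper packages the interior and wrap-around cases together using an indicator $\mathbf 1_{m=n}$ rather than splitting them, and it states the ratio $r_{\sigma\circ(m,m+1)}(w)/r_\sigma(w)$ without the detailed letter-pattern case check you sketch, but the underlying mechanism is identical.
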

\begin{proof}
  Thanks to~\eqref{eq:14}, it is sufficient to show that for any $w \in \mathcal{W} \setminus \mathcal{W}_0$,
\[
\sum_{\sigma\in \mathfrak S_n} A_\sigma r_\si(w) Z_{\si}(w) = 0. 
\]
Fix a particular word $w \in \mathcal{W} \setminus \mathcal{W}_0$, and pick some $m$ such that $w_mw_{m+1}=ML$ (we consider the integers modulo $n$, in particular $n+1$ is identified with $1$). By pairing the permutation $\si$ with $\si\circ(m,m+1)$, we can write the sum displayed above as 
\begin{equation}
	\frac{1}{2} \sum_{\sigma\in \mathfrak S_n} 
	\left[A_\sigma r_\si(w) Z_{\si}(w) + A_{\sigma \circ (m, m+1)} r_{\sigma \circ (m, m+1)}(w) Z_{\si\circ (m, m+1)}(w)\right]. \label{eq:20}
\end{equation}
We wish to compute the ratio of the two terms in the summand above. First, it follows from the definitions of $r_\si(w)$ and $Z_\si(w)$  that 
\begin{equation*}
   \frac{ r_{\sigma \circ (m, m+1)}(w)}{r_\si(w)} =  \frac{M(z_{\si(m+1)})L(z_{\si(m)})-1}{M(z_{\si(m)})L(z_{\si(m+1)})-1}\quad\text{and}\quad  \frac{Z_{\si\circ (m, m+1)}(w)}{Z_{\si}(w)}=\left(\frac{z_{\sigma(1)}}{z_{\sigma(n)}}\right)^{N\mathbf 1_{m=n}}.
\end{equation*}
Furthermore, by Lemma~\ref{lem:ratios},  we have
\begin{equation*}
   \frac{A_{\sigma \circ (m, m+1)}}{A_\si}=-\left(\frac{z_{\sigma(n)}}{z_{\sigma(1)}}\right)^{N\mathbf 1_{m=n}} \frac{M(z_{\si(m)})L(z_{\si(m+1)})-1}{M(z_{\si(m+1)})L(z_{\si(m)})-1}.
\end{equation*}
Therefore, for any value of $m$ 

\begin{equation}
\frac{A_{\sigma \circ (m, m+1)} r_{\sigma \circ (m, m+1)}(w) Z_{\si\circ (m, m+1)}(w)}{A_\sigma r_\si(w) Z_{\si}(w)} = -1 ,\label{eq:18}
\end{equation}
so that the sum~\eqref{eq:20} vanishes.
\end{proof}

We conclude the proof by computing the contributions corresponding to the constant words in the simple expression of $V\psi(\vec x)$ provided by the previous lemma. The definition of $Z_{\sigma}^{\vec x}$ implies that
$Z_\si(M\cdots M) = Z_{\sigma}^{\vec x}$ and $Z_\si(L\cdots L) = Z_{\sigma}^{(\tau^{-1}) \cdot \vec x}.$

Hence, the sum corresponding to the word $M\cdots M$ is equal to
\[
\sum_{\sigma\in \mathfrak S_n}A_\sigma r_\sigma(M\cdots M) Z_\si(M\cdots M) = \Big(\prod_{i=1}^n M(z_i) \Big)\sum_{\sigma\in \mathfrak S_n}A_\sigma Z_{\sigma}^{\vec x} = \Big(\prod_{i=1}^n M(z_i) \Big)\,\psi(\vec x).
\]
For the word $L \cdots L$, the same computation gives
\[
\sum_{\sigma\in \mathfrak S_n}A_\sigma r_\si(L\cdots L) Z_\si(L \cdots L) = \Big(\prod_{i=1}^n L(z_i) \Big)\sum_{\sigma\in \mathfrak S_n}A_\sigma Z_{\sigma}^{\tau^{-1}\vec x} = \Big(\prod_{i=1}^n L(z_i)\Big)\,
\psi(\vec x),
\]
where the final equality follows from the change of variables formula~\eqref{eq:5}.

\subsection{Proof of Theorem~\ref{thm:BA} when one entry is zero}\label{sec:proofDegenerate}

For this part, suppose $(p_1, p_2, \dots, p_n) \in \mathcal{D}_\Delta^n$ satisfies the Bethe equations~\eqref{eq:BA} 
and that one of $p_1,\dots, p_n$ is null. 
Since $p_1,\dots,p_n$ are distinct, there exists exactly one index $\ell$ with $p_\ell = 0$.
The symmetry under the permutation group allows us to assume without loss of generality that $p_1= 0$. 
Henceforth we work under this assumption.

In the whole proof, we consider integers modulo $n$. In particular, $n+1$ is considered equal to 1. Recall that $\mathcal W_0$ denotes the set of constant words, and introduce the set $\mathcal W_1$ of words $w$ such that there exists a unique index $m$ with $w_mw_{m+1}=ML$. These words are formed, when regarded periodically, from a non-empty sequence of letters $M$ followed by a non-empty sequence of letters $L$. We also set
$$\Pi_M:= \prod_{k=2}^nM(z_k).$$

The proof begins very much like in the previous section. Namely, we may apply~\eqref{eq:lem0}, as it does not rely on the assumption that the $p_i$'s are nonzero to find
$$V\psi(\vec x)=\sum_{\sigma\in\mathfrak S_n}A_\sigma R_\sigma.$$
The computation of $R_\si$ in Section~\ref{sec:encodWords} was based on the assumption that the $p_k$'s are non-zero. To reuse those results, we introduce a new variable $\varepsilon \not \in \{0,p_2,\dots,p_n\}$ and set $z=\exp(i \varepsilon)$. Our goal will be to take the limit as $\varepsilon$ tends to 0 of the quantities defined below. 

Let $R_\sigma^{\varepsilon}$, $r^\varepsilon_\si(w)$ and $Z_{\sigma}^{\varepsilon}(w)$ be the quantities defined in the previous section, but with $\varepsilon$ instead of $p_1(=0)$ and therefore $z$ instead of $z_1=\exp(ip_1)=1$. Lemma~\ref{lem:1} (which does not rely on~\eqref{eq:BA}) gives
\begin{align}\label{eq:REpsilonRep}
\displaystyle R_\sigma^{\varepsilon} = \sum_{w\in\mathcal W}  r^\varepsilon_\si(w) Z_\sigma^\varepsilon(w).
\end{align}

Observe that $R^{\varepsilon}(\sigma)$ is a polynomial in $z$ and that it is equal to $R_{\sigma}$ when $z=1$. Thus, continuity guarantees that
\[
V \psi (\vec x) =  \lim_{\varepsilon \rightarrow 0} \sum_{\si\in \mathfrak S_n} A_\sigma R_\sigma^{\varepsilon}.
\]

Note that the coefficients $A_\sigma$ used here {\em do not} depend on $\varepsilon$; they are computed using $(p_1,\dots,p_n)$. Before studying the limit when $\eps$ tends to $0$, we use the word encoding of $R_\si^\eps$ and perform some algebraic manipulations as in the nonsingular case in order to obtain a simple expression for $\sum_\si A_\si R_\si^\eps$. 

Summing~\eqref{eq:REpsilonRep} over all the permutations, 
we obtain
\begin{equation}
  \label{eq:10}
  \sum_{\si\in \mathfrak S_n} A_\si R^\eps_\si=\sum_{\substack{(w, \si )\in \mathcal W\times \mathfrak S_n}} g^\eps_\si(w), 
  \qquad  \text{ where $g^\eps_\si(w) :=  A_\si r^\varepsilon_\si(w) Z_\sigma^\varepsilon(w)$} .
\end{equation}

We begin by applying the strategy from the proof of Lemma~\ref{lem:2}. Using suitable pairing, we obtain many cancellations in the sum above. This is based on the following relation. Let $w\in \mathcal W$ and $\si\in \mathfrak S_n$, and assume that there exists an index $m$ such that $w_mw_{m+1}=ML$ and $\sigma(m)$ and $\sigma(m+1)$ are different from 1. In this case, as in \eqref{eq:18}, we have
\begin{equation*}
\frac{g_{\si\circ(m,m+1)}^\eps(w)} {g_{\si}^\eps(w)}=-1.
\end{equation*}
Thus, the contribution of any pair $(w,\si)$ to~\eqref{eq:10} cancels out with that of $(w,\si\circ(m,m+1))$.
The only terms in~\eqref{eq:10} that do not vanish correspond to pairs $(w,\sigma)$ such that 
\begin{enumerate}[noitemsep,nolistsep]
\item[a.] $w\in \mathcal W_0$ and $\sigma\in\mathfrak S_n$ is arbitrary or 
\item[b.] $w\in\mathcal W_1$ and $\sigma\in\mathfrak S_n$ satisfies that $\sigma(m)=1$ or $\sigma(m+1)=1$ for the unique $m$ such that $w_mw_{m+1}=ML$.
\end{enumerate} 
We obtain



\begin{equation}
  \sum_{\si\in \mathfrak S_n} A_\si R^\eps_\si~=~\underbrace{\sum_{(w,\sigma)\text{ in Case a}} g_\si^\eps(w)}_{T_0(\eps)} ~+~\underbrace{
  \sum_{(w,\sigma)\text{ in Case b
}}  g_\si^\eps(w)}_{T_1(\eps)}.
\end{equation}

We will compute the limits of the two terms $T_0(\eps)$ and $T_1(\eps)$ in Lemmas~\ref{lem:T0} and~\ref{lem:T1}, respectively, and the proof of the Bethe ansatz will follow by summing the two results. Taking the limit in the expressions above is not straightforward: each term $g^\eps_\si(w)$ taken independently diverges like $O(1/\eps)$ as $\eps$ tends to 0 (since it contains a factor $L(z)$ or $M(z)$). For the analysis of both $T_0(\eps)$ and $T_1(\eps)$, we will use suitable groupings to cancel these diverging terms, and study the constant order terms that remain after these cancellations. In $T_0(\eps)$, we show that the diverging terms corresponding to the word $L\cdots L$ cancel with the diverging terms corresponding to the word $M\cdots M$, leaving an extra non-vanishing term that comes from the toroidal boundary conditions. In $T_1(\eps)$, the diverging part of $g_\si^\eps(w)$ cancels with the one of $g_{\si\circ(m,m+1)}^\eps(w)$ (where $m$ is such that $w_mw_{m+1}=ML$).

In the non degenerate case, we used several times the relation~\eqref{eq:6} in Lemma~\ref{lem:ratios} expressing  $\Theta(p_k,p_\ell)$ in terms of the functions $L,M$. This relation is particularly useful to compute ratios between different $A_\si$. When one entry is vanishing, we will use the following straightforward identity: for every $k\ge 2$ 
\begin{equation}
  \label{eq:19}
   \exp\big(i\Theta(0,p_{k})\big)=-\frac{L(z_k)}{M(z_k)}.
\end{equation}
 
Let us now move to the computation of the limits of the two terms in~\eqref{eq:10}.
We begin with the term $T_0(\eps)$ which is the easiest one  to compute.
 \begin{lemma}\label{lem:T0}
  We have 
   \begin{equation*}\label{eq:ConstantWordsSingular}
     \lim_{\eps\to 0} T_0(\eps)  ~=~ (2 - c^2) \Pi_M\, \psi(\vec x)~+~c^2N \Pi_M\sum_{\substack{\si\in\mathfrak S_n\\\si(n)=1}}A_\si Z_\si^{\vec x}.
   \end{equation*}
 \end{lemma}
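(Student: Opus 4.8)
The plan is to isolate the contributions of the two constant words $L\cdots L$ and $M\cdots M$ that make up Case a, extract the common (divergent) prefactor coming from the perturbed letter, and then play the exact identity $L(z)+M(z)=2-c^2$ against the change of variables formula~\eqref{eq:5}, so that a genuinely divergent piece survives only through the toroidal boundary convention $x_0=x_n-N$ and produces the finite factor $c^2N$.

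First I would compute the $\eps$-perturbed weights of the constant words, writing $\tilde z_1=z=e^{i\eps}$ and $\tilde z_j=z_j$ for $j\ge 2$. From the definition of $r_\si(w)$ one reads off $r_\si^\eps(M\cdots M)=\prod_{j=1}^n M(\tilde z_j)=M(z)\,\Pi_M$ and $r_\si^\eps(L\cdots L)=L(z)\prod_{j=2}^nL(z_j)$. Here the key input is Bethe's equation~\eqref{eq:BA} at the vanishing rapidity $p_1=0$: since its left-hand side equals $1$, combining it with the identity~\eqref{eq:19} yields $\prod_{j=2}^nL(z_j)=\prod_{j=2}^nM(z_j)=\Pi_M$, so that $r_\si^\eps(L\cdots L)=L(z)\,\Pi_M$ as well. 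This lets me factor $\Pi_M$ out of $T_0(\eps)$ and reduce the problem to
\[
T_0(\eps)=\Pi_M\sum_{\si\in\mathfrak S_n}A_\si\big[L(z)\,Z_\si^\eps(L\cdots L)+M(z)\,Z_\si^\eps(M\cdots M)\big].
\]

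Next, writing $k_0=\si^{-1}(1)$ for the position carrying the perturbed rapidity, I would record the elementary factorizations $Z_\si^\eps(M\cdots M)=z^{x_{k_0}}Z_\si^{\vec x}$ and $Z_\si^\eps(L\cdots L)=z^{x_{k_0-1}}Z_\si^{\tau^{-1}\vec x}$, the single factor $z^{\cdots}$ isolating the entry where $z_1$ was replaced by $z$. Substituting $M(z)=(2-c^2)-L(z)$ splits the bracket into a finite part $(2-c^2)Z_\si^\eps(M\cdots M)$, whose $\eps\to 0$ limit gives $(2-c^2)\Pi_M\,\psi(\vec x)$ through $\psi(\vec x)=\sum_\si A_\si Z_\si^{\vec x}$, plus the divergent part $L(z)\big[Z_\si^\eps(L\cdots L)-Z_\si^\eps(M\cdots M)\big]$. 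Although each summand of the latter blows up like $1/\eps$, I would apply~\eqref{eq:5} with $f(\si)=z^{x_{\si^{-1}(1)}}$ to see that $\sum_\si A_\si z^{x_{k_0}}Z_\si^{\vec x}$ and $\sum_\si A_\si z^{x_{k_0-1}}Z_\si^{\tau^{-1}\vec x}$ coincide on every $\si$ except those with $\si(1)=1$, where $k_0-1=0$ forces $x_0=x_n-N$ and hence a discrepancy $z^{x_0}-z^{x_n}=z^{x_n}(z^{-N}-1)$. This collapses the divergent part to $L(z)(z^{-N}-1)\sum_{\si:\,\si(1)=1}A_\si z^{x_n}Z_\si^{\tau^{-1}\vec x}$.

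Finally I would take the limit using $\lim_{z\to 1}L(z)(z^{-N}-1)=c^2N$ (since $L(z)\sim c^2/(1-z)$ while $(z^{-N}-1)/(1-z)\to N$), so the divergent part tends to $c^2N\sum_{\si:\,\si(1)=1}A_\si Z_\si^{\tau^{-1}\vec x}$, and one last use of~\eqref{eq:5} (now with $f(\si)=\mathbf 1_{\si(n)=1}$, using $(\si\tau)(n)=\si(1)$) rewrites this as $c^2N\sum_{\si:\,\si(n)=1}A_\si Z_\si^{\vec x}$. Multiplying by $\Pi_M$ and adding the finite part yields the asserted formula. The main obstacle is exactly this divergent piece: one must avoid the (infinite) term-by-term limit and instead recognize that the toroidal boundary discrepancy $x_0=x_n-N$ is what converts the $1/\eps$ pole of $L(z)$ into the finite and non-vanishing factor $c^2N$; the delicate bookkeeping is tracking the index shift under $\tau$, in particular the wrap-around $\tau^{-1}(1)=n$, which localizes the surviving contribution on the permutations fixing the zero rapidity at the boundary.
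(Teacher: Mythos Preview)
Your proposal is correct and follows essentially the same route as the paper. Both arguments rest on the same four ingredients: the Bethe equation at $p_1=0$ combined with~\eqref{eq:19} to obtain $\prod_{j\ge 2}L(z_j)=\Pi_M$; the algebraic identity $L(z)+M(z)=2-c^2$; the change of variables formula~\eqref{eq:5} to match the $L\cdots L$ and $M\cdots M$ sums up to the boundary discrepancy $z^{x_0}$ versus $z^{x_n}$; and the limit $L(z)(z^{-N}-1)\to c^2N$. The only cosmetic difference is that the paper applies~\eqref{eq:5} once to the $L\cdots L$ sum (landing directly on the condition $\si(n)=1$), whereas you apply it first to the $M\cdots M$ sum, obtain the condition $\si(1)=1$, and then invoke~\eqref{eq:5} a second time to convert it to $\si(n)=1$.
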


\begin{proof}
For every permutation $\si\in\mathfrak S_n$, 
$$r^\eps_\si(M\cdots M)=M(z)\Pi_M.$$ Therefore, the contribution of the word $M\cdots M$ can be written as
  \begin{equation}\label{eq:17}
    \sum_{\si\in\mathfrak S_n}g_\si^\eps (M\cdots M)=M(z)\Pi_M\sum_{\si\in \mathfrak S_n}A_\si Z_{\si}^\eps(M\cdots M). 
  \end{equation}
  Let us now move to the contribution of the word $L\cdots L$. Using first~\eqref{eq:19}, and then Bethe's equations~\eqref{eq:BA} applied to $(0, p_2, \dots , p_n)$, we obtain
  \[
  \prod_{k=2}^n \frac{M(z_k)}{L(z_k)} = (-1)^{n-1} \exp\left(- i \sum_{k=2}^n\Theta(0,p_k)\right) \stackrel{\eqref{eq:BA}}{=} 1.
  \]
  Thus, for any permutation $\sigma$,
  \[
  r^{\varepsilon}_\si(L\cdots L) = L(z) \Pi_M.
  \]
  Defining $f(\si)=z^{x_{k-1}}$ when  $\si(k)=1$, we have 
  \begin{equation*}
    Z_\si^\eps(L\cdots L)=f(\si)Z_\si^{\tau^{-1}\vec x}.
  \end{equation*}
  Using the two displayed equations above and then the change of variables formula~\eqref{eq:5}, we can write the contribution of the constant word $L\cdots L$ as
  \begin{align}
    \sum_{\si\mathfrak S_n}g_\si^\eps(L\cdots L)&= L(z) \Pi_M\sum_{\si\in\mathfrak S_n}A_\si f(\si) Z_{\si}^{\tau^{-1}\vec x}\notag\\
    &=L(z)\Pi_M\sum_{\si\in\mathfrak S_n}A_\si f(\si\circ\tau^{-1}) Z_{\si}^{\vec x}\notag\\
    &=L(z)\Pi_M\sum_{\si\in\mathfrak S_n}A_\si z^{-N\mathbf 1_{\sigma(n)=1}} Z_{\si}^\eps(M\cdots M).\label{eq:22}
  \end{align}
Finally, putting the contributions~\eqref{eq:17} and~\eqref{eq:22}  of the two words together, we find
\begin{align*}
  T_0(\eps)&=\left[M(z)+L(z)\right]\Pi_M\sum_{\si\in\mathfrak S_n}A_\si  Z_{\si}^\eps(M\cdots M) +  \Pi_M\sum_{\si\in\mathfrak S_n}A_\si L(z)\left[1-z^{-N\mathbf 1_{\sigma(n)=1}}\right] Z_{\si}^\eps(M\cdots M).
\end{align*}
The proof follows by letting $z$ tend to 1 and using the straightforward computations 
\begin{align*}
L(z) + M(z) = 2 - c^2, \quad
\displaystyle\lim_{z\to1} Z_{\si}^\eps(M\cdots M)=Z^{\vec x}_\si \quad \text{ and }\quad
\displaystyle \lim_{z\to1} L(z)(1-z^{-N\mathbf 1_{\sigma(n)=1}})=c^2 N\mathbf 1_{\si(n)=1}.
\end{align*}
\end{proof}

The computation of the limit of $T_1(\eps)$ is less direct and requires further algebraic manipulations. 
Note that this limit corresponds to terms which cancel exactly in the non-degenerate case, but which contribute in this case. 

\begin{lemma}\label{lem:T1}
  We have that
  \begin{equation*}
    \label{eq:7}
    \lim_{\eps\to 0} T_1(\eps)~=~c^2 \Pi_M \Big(N+\sum_{k=2}^n\partial_1 \Theta(0,p_k)\Big)\, \psi(\vec x)~-~c^2N\Pi_M\,\sum_{\substack{\si\in\mathfrak S_n\\\si(n)=1}}A_\si Z_\si^{\vec x}.
  \end{equation*}
\end{lemma}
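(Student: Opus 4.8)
The plan is to compute $T_1(\eps)$ by summing over the words $w \in \mathcal W_1$ together with the constrained permutations described in Case b, and to extract the finite limit by pairing the two permutations associated to each word. Recall that a word $w \in \mathcal W_1$ is determined by the unique index $m$ with $w_m w_{m+1} = ML$; periodically it consists of a block of $M$'s followed by a block of $L$'s. For such a $w$, the relevant permutations are exactly those with $\sigma(m) = 1$ or $\sigma(m+1) = 1$. I would group these in pairs $\{\sigma, \sigma \circ (m,m+1)\}$, so that within a pair one permutation sends the position $m$ to the zero momentum $p_1 = 0$ and the other sends $m+1$ to it. The point is that $r^\eps_\si(w)$ contains the factor $M(z_{\sigma(m)}) L(z_{\sigma(m+1)}) - 1$ coming from the single $ML$-junction, and when $\sigma(m)=1$ or $\sigma(m+1)=1$ this factor involves $M(z) = M(e^{i\eps})$ or $L(z) = L(e^{i\eps})$, which diverges like $O(1/\eps)$.

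First I would write $g^\eps_\si(w)$ explicitly for $w \in \mathcal W_1$. Away from the $ML$-junction the letters are all equal (all $M$ before, all $L$ after), so $r^\eps_\si(w)$ is a product of $L$'s and $M$'s over the non-junction positions times the single junction factor $M(z_{\sigma(m)})L(z_{\sigma(m+1)}) - 1$. Using the identity $\prod_{k=2}^n M(z_k)/L(z_k) = 1$ established in the proof of Lemma~\ref{lem:T0} (which follows from~\eqref{eq:19} and Bethe's equations), the product of $L$'s and $M$'s over non-junction positions should simplify, up to the contribution of the zero mode, to $\Pi_M$ times a power of $L(z)/M(z)$. The key local computation is the expansion of the junction factor as $\eps \to 0$: when the zero momentum sits at position $m$ (so $z_{\sigma(m)} = z = e^{i\eps}$), we have $M(z) L(z_{\sigma(m+1)}) - 1$, and when it sits at $m+1$ we have $M(z_{\sigma(m)}) L(z) - 1$; these must be Taylor-expanded to first order in $\eps$ because $M(z), L(z) = O(1/\eps)$.

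The crux is the pairing. Within each pair $\{\sigma, \sigma\circ(m,m+1)\}$, I would compute the ratio $g^\eps_{\sigma\circ(m,m+1)}(w)/g^\eps_\sigma(w)$ using Lemma~\ref{lem:ratios} for the $A$-ratio and the explicit forms of $r^\eps$ and $Z^\eps$; at $\eps = 0$ this ratio is $-1$ (exactly as in the nonsingular cancellation of Lemma~\ref{lem:2}), so the leading $O(1/\eps)$ divergences cancel between the two members of the pair. What survives is the $O(1)$ term obtained from the $O(\eps)$ correction to the ratio multiplied against the $O(1/\eps)$ magnitude of each term. Concretely, I expect the derivative $\partial_1 \Theta(0, p_k)$ to emerge here: differentiating~\eqref{eq:19}, or equivalently expanding $\exp(i\Theta(\eps, p_k))$ to first order, links the first-order correction of the junction factor and the $A$-ratio to $\partial_1 \Theta(0, p_{\sigma(m+1)})$ (or $\partial_1 \Theta(0,p_{\sigma(m)})$). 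Summing the surviving constant term over the index $m$ (equivalently over which position carries the zero mode) and over the admissible $\sigma$ should reconstitute the sum $\sum_{k=2}^n \partial_1 \Theta(0, p_k)$ and, after using the change of variables formula~\eqref{eq:5} to collapse the $Z$-sums back to $\psi(\vec x)$, give the first term $c^2 \Pi_M (N + \sum_{k=2}^n \partial_1 \Theta(0,p_k))\,\psi(\vec x)$.

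The main obstacle will be bookkeeping the $O(1/\eps)$ and $O(1)$ contributions so that the cancellation is clean and the finite remainder is correctly identified; in particular the boundary term $-c^2 N \Pi_M \sum_{\sigma(n)=1} A_\si Z_\si^{\vec x}$ must be tracked separately. This correction term arises from the words $w \in \mathcal W_1$ whose $ML$-junction straddles the seam at position $m=n$ (where $Z^\eps_\si(w)$ picks up the factor $z^{\pm N}$ encoding the toroidal wrap, exactly the $\mathbf 1_{m=n}$ contribution appearing in the $Z$-ratios of Lemma~\ref{lem:2}); expanding $z^{-N} = e^{-iN\eps}$ to first order produces an extra $O(N)$ term that does not vanish and exactly matches the $-c^2 N \Pi_M$ boundary contribution, designed so that it cancels against the identical term with opposite sign appearing in Lemma~\ref{lem:T0}. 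Getting the orientation and sign of this wrap-around term right, and ensuring that the residual $\partial_1\Theta$ terms are summed over the correct index set, will be the delicate part of the computation; everything else is a finite Taylor expansion combined with the change of variables~\eqref{eq:5}.
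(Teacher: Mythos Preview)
Your pairing strategy is exactly the paper's Claim~1: for each $w\in\mathcal W_1$ with $ML$-junction at $m$, pair the two admissible permutations to cancel the $O(1/\eps)$ singularities and extract a finite remainder of the form $D_\sigma\hat g_\sigma(w)$, where $D_\sigma=\partial_1\Theta(0,p_{\sigma(m)})+N\mathbf 1_{m=n}$. Your identification of the source of the $N$ (the $z^{\pm N}$ wrap-around at $m=n$) and of the $\partial_1\Theta$ term (first-order Taylor expansion of the $A$- and junction-ratios) is correct.

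The gap is in the second half. After pairing you are left with $\sum_{w\in\mathcal W_1}\sum_{\sigma:\sigma(m+1)=1}D_\sigma\hat g_\sigma(w)$, and each $\hat g_\sigma(w)$ carries a mixed product $\prod_{k\in L\text{-block}\setminus\{m+1\}}L(z_{\sigma(k)})\prod_{k\in M\text{-block}}M(z_{\sigma(k)})$ together with the phase $Z_\sigma(w)$, both of which depend on the location $\ell$ of the $LM$-junction. Your claim that this mixed product ``simplifies to $\Pi_M$ times a power of $L(z)/M(z)$'' is incorrect: the global identity $\prod_{k=2}^n M(z_k)/L(z_k)=1$ says nothing about partial products over a subset $\sigma(A)\subsetneq\{2,\dots,n\}$, and the zero mode sits at position $m+1$, not in the $L/M$ product. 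Likewise the change-of-variables formula~\eqref{eq:5} only implements the cyclic shift $\tau$ and cannot by itself collapse $Z_\sigma(w)$ for a general $w\in\mathcal W_1$ back to $Z_\sigma^{\vec x}$.

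What the paper does instead is a second nontrivial step (Claim~2): it proves $\hat g_\sigma(w)=\hat g_{\sigma\circ[\ell,m]}(M\cdots M)$ by ``zipping'' the $L$-letters one at a time into $M$'s. Each zip uses the \emph{individual} identity $L(z_j)/M(z_j)=-\exp(-i\Theta(0,p_j))$ from~\eqref{eq:19}, and absorbs the resulting phase into an $A$-ratio via Lemma~\ref{lem:ratios}; crucially this also moves the position carrying the zero mode, so that the $Z$-factor is unchanged at each step. After this reduction, the sum over $\ell\neq m$ becomes a sum over permutations $\sigma'=\sigma\circ[\ell,m]$ with $\sigma'(\ell)=1$, i.e.\ a full sum over $\mathfrak S_n$, and only then does one recover $\psi(\vec x)$. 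This zipping identity is the piece missing from your outline, and it is where most of the work in the lemma lies.
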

This lemma, together with Lemma~\ref{lem:T0}, implies the theorem in the singular case 
(note that the second term in the RHS above cancels exactly the second term in the expression of Lemma~\ref{lem:T0}).
The rest of the section is dedicated to proving Lemma~\ref{lem:T1}.


\begin{proof}
The proof is done in several steps.  
First, for $(w,\si)$ a word, permutation pair contributing to $T_1$, we will group $g_\si^\eps(w)$ with $g_{\si\circ(m,m+1)}^\eps(w)$ (for $m$ such that $w_mw_{m+1}=ML$) to cancel the singular terms in $g_\si^\eps(w)$ and $g_{\si\circ(m,m+1)}^\eps(w)$. While in the non-degenerate case, $g_\si^\eps(w)+g_{\si\circ(m,m+1)}^\eps(w)$ is exactly equal to 0, this is no longer the case here, and we will see that a new term written
$D_{\sigma}\hat g_\si(w)$ appears in the limit $\eps\searrow 0$, where
\begin{align}
	&\hat g_\si(w) 
	:= A_\si c^2\prod_{\substack{k : w_k = L,\\ \sigma(k)\ne 1}} L(z_{\sigma(k)}) 
	\prod_{\substack{k : w_k = M \\ \sigma(k) \neq 1}} M(z_{\sigma(k)})\ Z_\si(w) \quad \text{ and } \nonumber\\
	 &D_\sigma := \partial_1\Theta(0, p_{\sigma(m)}) + N \mathbf 1_{m = n}. \label{eq:AD}
\end{align}
Let us highlight the fact that we may see $m$ (and therefore $D_\sigma$) as a function of $\sigma$, since $m=\sigma^{-1}(1)-1$. 
In particular, $D_\sigma$ does not depend on $w$, as illustrated by the notation.

Second, we will show that $\hat g_\si(w)$ can be expressed in terms of $\hat g_{\si\circ [\ell,m]}(M\cdots M)$, where $[\ell,m]$ is a permutation depending on the word $w$ only. Finally, we will combine the two previous claims to conclude the proof.

  \bigbreak
  \noindent{\bf Claim 1. }{\em Fix $1\le m\le n$. Consider $\sigma\in\mathfrak S_n$ with $\sigma(m+1)=1$ and $w\in\mathcal W_1$ with $w_mw_{m+1}=ML$. Then,
\begin{equation*}
  \label{eq:3}
  \lim_{\eps\to 0} \, g_\si^\eps(w)+g_{\si\circ(m,m+1)}^\eps(w)~=~ D_\sigma\hat{g}_\sigma(w),
\end{equation*}
where $D_\sigma$ and $\hat{g}_\sigma(w)$ are defined in~\eqref{eq:AD}.
}
\bigbreak\noindent
{\em Proof of Claim 1.} To prove the claim, we write
\begin{equation}
  \label{eq:21} g_\si^\eps(w)+g_{\si\circ(m,m+1)}^\eps(w)=g_\si^\eps(w)\bigg(1+\frac{g_{\si\circ(m,m+1)}^\eps(w)}{g_\si^\eps(w)}\bigg)
\end{equation}
and establish the asymptotic behavior of the two terms in the product. First, note that  
 \begin{equation}
g^\varepsilon_\si(w) = \hat g_\si(w) \cdot \frac{M(z_{\sigma(m)})L(z) - 1}{c^2M(z_{\sigma(m)})}= \hat g_\si(w) \Big(\tfrac{1}{i\eps}+o\big(\tfrac1\eps\big)\Big).\label{eq:24}
\end{equation}
The computation of the ratio in~\eqref{eq:21} is similar to previous computations. It follows from the definitions of $r_\si^\eps(w)$ and $Z_\si^\eps(w)$ that 
\begin{equation*}
\frac{r^\eps_{\si\circ{(m,m+1)}}(w)}{r^\eps_\si(w)}=\frac{M(z)L(z_{\si(m)})-1}{M(z_{\si(m)})L(z)-1}\overset{~\eqref{eq:6}}=\exp\big(i\Theta(\eps,p_{\si(m)})\big),
\end{equation*}
and
\begin{equation*}
\frac{ Z^{\varepsilon}_{\sigma \circ (m, m+1)}(w)}{ Z^\eps_{\sigma}(w)}=\Big(\frac{z}{z_{\si(n)}}\Big)^{N\mathbf1_{m=n}}.
\end{equation*}
Furthermore, by Lemma~\ref{lem:ratios}, we have
\begin{equation*}
\frac{A_{\si\circ(m,m+1)}}{A_\si}=-\big({z_{\sigma(n)}}\big)^{N\mathbf 1_{m=n}}\exp\big(i\Theta(p_{\si(m)},0)\big)=-\big({z_{\sigma(n)}}\big)^{N\mathbf 1_{m=n}}\exp\big(-i\Theta(0,p_{\si(m)})\big). 
\end{equation*}
(We used that $\Theta(y,x)=-\Theta(x,y)$.) Using the three equations above and a Taylor expansion, we find
 \begin{equation*}
   \frac{g_{\si\circ(m,m+1)}^\eps(w)}{g_\si^\eps(w)}=-1+i\eps D_\si+o(\eps).
\end{equation*}
Plugging~\eqref{eq:24} and the equation above in~\eqref{eq:21} completes the proof of the claim. 
\hfill$\square$\bigbreak

\noindent{\bf Claim 2.} 
{\em For $w\in \mathcal W_1$, let $\ell$ and $m$ be the unique indexes such that $w_\ell w_{\ell+1}=LM$  and $w_mw_{m+1}=ML$. If $\sigma\in\mathfrak S_n$ satisfies $\sigma(m+1)=1$, we find that \begin{equation} 
 { \hat g_{\si}(w)}= \hat g_{\si\circ[\ell,m]}(M\cdots M),
\end{equation}
where $[\ell,m]$ is the permutation defined by
$$[\ell,m](i)=\begin{cases}\ell &\text{ if }i=m+1,\\
i-1&\text{ if }i\in\{m+2,m+3,\dots,\ell-1,\ell\},\\
i&\text{ otherwise}.\end{cases}$$
Here, as in the rest of the proof, we use periodic notation for the set $\{m+2,m+3,\dots,\ell-1,\ell\}$.}
\bigbreak
We will prove this step by ``zipping up" the letters $L$ in the word $w$ step by step: imagine a zipper positioned at $m+1$, the pre-image of $1$ by $\sigma$. This is also the position of the first letter $L$ after the series of $M$ in $w$. Move the zipper one step on the right, thus changing the first letter $L$ to $M$ in $w$ and composing $\sigma$ with the transposition exchanging the zipper index with the index on its right. Such a procedure will be shown to not affect the quantity $\hat g_\si(w)$. 
 By doing this again and again, we zip off all the letters $L$ and end up with the constant word $M\cdots M$. The composition of all the transpositions gives~$[\ell,m]$.
\bigbreak

\noindent{\em Proof of Claim 2.} 
Let us start with analysing one move of the zipper. We will show that, for any $1\le k\le n$ and any $w\in\mathcal W_1$ and $\sigma\in\mathfrak S_n$ satisfying $\sigma(k)=1$ and $w_{k-1}w_k=ML$, we have that
  \begin{equation}\label{eq:abd}
   { \hat g_{\si}(w)}= {\hat g_{\si\circ(k,k+1)}(w')},
  \end{equation}
  where $w'$ is the word obtained from $w$ by changing $w_k=L$ to $w'_k=M$.
  To prove this fact, first observe that,
 by definition, 
  \begin{equation}
    \label{eq:23}
    \frac{\hat g_{\si\circ(k,k+1)}(w')}{\hat g_{\si}(w)}=\frac{A_{\si\circ(k,k+1)}}{A_\si}\cdot\frac{M(z_{\si(k)})}{L(z_{\si(k)})}\cdot\frac{Z_{\si\circ(k,k+1)}(w')}{Z_\si(w)}.
  \end{equation}
By Lemma~\ref{lem:ratios} and~\eqref{eq:19}, we have 
\begin{equation}\label{eq:23a}
    \frac{A_{\si\circ(k,k+1)}}{A_\si}=-\left(z_{\si(1)}\right)^{-N\mathbf 1_{k=n}}\exp\big(i\Theta(0,p_{\si(k+1)})\big)=\left(z_{\si(1)}\right)^{-N\mathbf 1_{k=n}}\:\frac{L(z_{\si(k)})}{M(z_{\si(k)})}.
\end{equation}
Here, the ratio of the $Z$ functions is not {\em a priori} simple; however, our requirement that $\sigma(k) = 1$ implies that
\begin{equation}\label{eq:23b}
\frac{Z_{\sigma \circ (k, k+1)}(w')}{Z_{\si}(w)}  =  \left(z_{\sigma(1)}\right)^{N\mathbf 1_{k=n}}. 
\end{equation}
Plugging~\eqref{eq:23a} and~\eqref{eq:23b} in~\eqref{eq:23} implies~\eqref{eq:abd}.

To conclude observe that $[\ell,m]=(m+1,m+2)\circ(m+2,m+3)\circ\dots\circ(\ell-1,\ell)$ (note that the indexes are taken in $\bbZ_n$, so that $\ell$ may in fact be smaller than $m$). Applying~\eqref{eq:abd} repeatedly proves the claim. \hfill$\square$
\bigbreak

We can now conclude the proof of Lemma~\ref{lem:T1}. 
For each $w\in\mathcal W_1$, there are exactly two terms of the form $g_\si^\eps(w)$ entering in the sum $T_1(\eps)$. Claim 1 enables us to rewrite the limit of the sum of these two terms in terms of $D_\sigma$ and $\hat g_\si(w)$, so that 
$$\lim_{\eps\rightarrow 0}T_1(\eps)=\sum_{m}\sum_{\substack{\sigma\in\mathfrak S_n\\ \sigma(m+1)=1}}\ \sum_{\substack{w\in \mathcal W_1\\ 
w_{m-1}w_{m}=ML}} D_\sigma\hat g_\si(w).$$
For each word in the third sum, denote by $\ell$ the unique index such that $w_\ell w_{\ell+1}=LM$. Note that, as $w$ ranges over words in $\calW_1$ with $w_{m-1}w_{m}=ML$, $\ell$ takes all the values of $\{1,\dots,n\}$ different from $m$. Therefore, Claim 2 implies that 
$$
	\lim_{\eps\rightarrow 0}T_1(\eps)
	=\sum_{m}\sum_{\substack{\sigma\in\mathfrak S_n:\\ \sigma(m+1)=1}}\sum_{\ell:\,\ell\ne m} D_{\sigma}\hat g_{\si\circ[\ell,m]}(M\cdots M).
$$
Using the change of variables $\si\mapsto\si\circ[\ell,m]$ and exchanging the sums, we obtain 
$$\lim_{\eps\rightarrow 0}T_1(\eps)=\sum_{\ell}\sum_{\substack{\sigma\in\mathfrak S_n\\ \sigma(\ell)=1}}\hat g_{\si}(M\cdots M)\Big(\sum_{m:\,m\ne \ell} D_{\sigma\circ[\ell,m]^{-1}}\Big).$$
For any $\si$ as in the second sum, $m+1$ is sent to 1 by $\sigma\circ[\ell,m]^{-1}$. Thus
$$
\sum_{m:\,m\ne \ell} D_{\sigma\circ[\ell,m]^{-1}}
=\sum_{m:\,m\ne \ell}\big(\partial_1\Theta(0, p_{\sigma(m)}) + N \mathbf 1_{m = n}\big)
=N\mathbf 1_{\sigma(n)\ne 1}+\sum_{k=2}^n\partial_1\Theta(0,p_k),$$
where we used that $\sum_{m\ne \ell}\mathbf 1_{m=n}=\mathbf 1_{\ell\ne n}=\mathbf 1_{\sigma(n)\ne1}$.
Thus 
\begin{align*}
	\lim_{\eps\rightarrow 0}T_1(\eps)
	=\sum_{\sigma\in\mathfrak S_n}\hat g_{\si}(M\cdots M)
	\Big(N\mathbf 1_{\sigma(n)\ne 1}+\sum_{k=2}^n\partial_1\Theta(0,p_k)\Big).
\end{align*}
The proof follows by observing that $\hat g_\si(M\cdots M)=c^2\Pi_MA_\sigma Z_\si^{\vec x}$.
\end{proof}

\section{The six-vertex transfer matrix}\label{sec:V}

The goal of this section is to prove Proposition~\ref{prop:transfer-matrix}. We begin by defining the transfer matrix in a standard way. 
Let $G$ be the graph constructed by putting horizontal edges between neighboring vertices of $\mathbb Z_N$ together with vertical edges above and below each vertex. For basis vectors $\Psi_{\vec{x}}$ and $\Psi_{\vec{y}}$, let $\mathcal V(\Psi_{\vec{x}}, \Psi_{\vec{y}})$ be the set of arrow configurations on $G$ that obey the ice rule and whose vertical arrows coincide with $\Psi_{\vec x}$ on the bottom vertical edges and with $\Psi_{\vec y}$ on the top ones. Then, set
\[
\widetilde V(\Psi_{\vec{x}}, \Psi_{\vec{y}}) := \sum_{\omega \in \mathcal V(\Psi_{\vec{x}}, \Psi_{\vec{y}})} a^{n_1+n_2}b^{n_3+n_4}c^{n_5+n_6} \, ,
\]
where $n_i$ is the number of vertices of $\mathbb Z_N$ with configuration $i$ in $\omega$.  


\begin{lemma}\label{lem:Interlacement}
	For any pair of basis vectors $\Psi_{\vec{x}}, \Psi_{\vec{y}}$, 
	$$|\mathcal V(\Psi_{\vec{x}}, \Psi_{\vec{y}})|=\begin{cases} 2 &\text{ if $\vec x=\vec y$},\\
	1&\text{ if $\vec x\ne\vec y$ are interlaced},\\
	0&\text{ otherwise}.\end{cases}$$ 
\end{lemma}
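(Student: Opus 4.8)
My plan is to strip the problem down to its only real degree of freedom. A configuration $\omega \in \mathcal V(\Psi_{\vec x}, \Psi_{\vec y})$ has its vertical arrows prescribed by $\Psi_{\vec x}$ (bottom) and $\Psi_{\vec y}$ (top), so the sole freedom lies in the $N$ horizontal edges. I would write $h_i \in \{-1,+1\}$ for the arrow on the horizontal edge joining $i$ and $i+1$ (with $h_0 := h_N$, all indices read in $\mathbb Z_N$), where $+1$ denotes a rightward arrow. Counting incoming versus outgoing arrows at vertex $i$, the ice rule becomes the discrete conservation law
\[
h_i - h_{i-1} = \Psi_{\vec x}(i) - \Psi_{\vec y}(i), \qquad i \in \mathbb Z_N.
\]
Summing over the cycle makes the left side telescope to $0$, so a configuration can exist only if $\sum_i \Psi_{\vec x}(i) = \sum_i \Psi_{\vec y}(i)$, i.e. only if $|\vec x| = |\vec y|$. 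This already yields the count $0$ in the ``otherwise'' case whenever the lengths differ, so from here I assume $|\vec x| = |\vec y| = n$.

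Under this assumption the conservation law shows that $h$ is determined by the single value $h_N$ via the prefix sums
\[
h_i = h_N + D_i, \qquad D_i := 2\big(N_x(i) - N_y(i)\big),
\]
where $N_x(i)$, resp. $N_y(i)$, counts the coordinates of $\vec x$, resp. $\vec y$, that are at most $i$; note $D_N = 0$, consistent with cyclicity. Thus there are at most two configurations, one for each choice $h_N = \pm 1$, and the choice $h_N = \varepsilon$ produces a genuine configuration exactly when $\varepsilon + D_i \in \{-1,+1\}$ for every $i$. Since $D_i$ is even, I would record this as $N_x(i) - N_y(i) \in \{0,1\}$ for all $i$ when $\varepsilon = -1$, and $N_x(i) - N_y(i) \in \{-1,0\}$ for all $i$ when $\varepsilon = +1$.

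The crux is to identify these two uniform bounds on the prefix counts with the two interlacement patterns, and this ballot-type equivalence is where I expect the only real work to be. Concretely I would argue that $0 \le N_x(i) - N_y(i) \le 1$ for all $i$ holds iff $x_k \le y_k$ for all $k$ (from the lower bound) and $y_k \le x_{k+1}$ for all $k$ (from the upper bound, by testing $i = x_{k+1}$ to see that a violation forces the difference to reach $2$), i.e. iff $x_1 \le y_1 \le x_2 \le \cdots \le x_n \le y_n$; the symmetric bound $-1 \le N_x(i) - N_y(i) \le 0$ gives $y_1 \le x_1 \le y_2 \le \cdots \le y_n \le x_n$. Hence the number of admissible configurations equals the number of interlacement patterns satisfied by $(\vec x,\vec y)$. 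To finish I would note that both patterns hold simultaneously iff $N_x(i) = N_y(i)$ for all $i$, i.e. iff $\vec x = \vec y$: in that case $D_i \equiv 0$, both choices of $h_N$ are admissible, and there are $2$ configurations; if $\vec x \ne \vec y$ are interlaced exactly one pattern holds, giving $1$; and if the pair is not interlaced neither bound can hold, giving $0$. Everything outside the interlacement equivalence is bookkeeping.
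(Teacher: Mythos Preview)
Your argument is correct. You encode the ice rule as the discrete conservation law $h_i - h_{i-1} = \Psi_{\vec x}(i) - \Psi_{\vec y}(i)$, solve it via prefix sums, and reduce the question to a ballot-type characterisation of interlacement; the equivalence you sketch (that $0 \le N_x(i) - N_y(i) \le 1$ for all $i$ is exactly $x_1 \le y_1 \le x_2 \le \cdots \le x_n \le y_n$) is standard, and the test points $i = y_k$ and $i = x_{k+1}$ you indicate are the right ones. The final trichotomy then falls out cleanly from counting which of the two bounds can be satisfied.

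The paper takes a more pictorial route. It calls a vertex a \emph{source} (both vertical arrows entering) or a \emph{sink} (both exiting), observes that at a source both horizontal arrows must point outward, and then propagates this constraint around the cycle vertex by vertex: once a single source is located, every horizontal arrow is forced, so $|\mathcal V| \le 1$, and the propagation closes up consistently iff sources and sinks alternate---which is read off directly as interlacement. The case $\vec x = \vec y$ is handled separately by inspection. Your prefix-sum formulation is tidier algebra and makes the ballot-sequence structure explicit; the paper's source/sink language, on the other hand, is reused downstream (in identifying the weight $c^{P(\Psi_{\vec x},\Psi_{\vec y})}$ in Corollary~\ref{cor:MatrixEquality} and throughout the commutation proof of Lemma~\ref{lemma:CommutingMatrices}), so it earns its keep there. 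The two arguments are of comparable length and difficulty.
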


\begin{proof}
If $\vec x=\vec y$, there are clearly two configurations of arrows in $\mathcal V(\Psi_{\vec x},\Psi_{\vec x})$ corresponding to all horizontal arrows pointing left, or all horizontal arrows pointing right.

Let us now assume that ${\vec{x}}\ne {\vec{y}}$ and $\mathcal V(\Psi_{\vec{x}}, \Psi_{\vec{y}})$ is nonempty. By summing the ice-rule at every vertex, we immediately see that $|\vec{x}| = |\vec{y}|$. 
Let $\om$ be a configuration in  $\mathcal V(\Psi_{\vec{x}}, \Psi_{\vec{y}})$.
Call a vertex in $\mathbb{Z}_N$ a source (resp. sink) if both vertical arrows enter (resp. exit) it; other vertices are neutral.

Since $\Psi_{\vec{x}} \neq \Psi_{\vec{y}}$, there exists $i$ such that $ \Psi_{\vec{x}}(i) = 1$ and $\Psi_{\vec{y}}(i) = -1$, or in other word such that  $i$ is a source. By the ice rule at $i$, the two horizontal arrows adjacent to $i$ must point outwards. 
This determines the orientation of three of the arrows adjacent to the vertex $i+1$. 
By considering the ice rule at the vertex $i+1$, we observe that the fourth adjacent arrow is also determined. 
Continuing this way, all arrows of $\om$ are determined, thus $|\mathcal V(\Psi_{\vec{x}}, \Psi_{\vec{y}})| = 1$. 

Moreover, the ice rule can be satisfied at $i+1$ if and only if $i+1$ is either neutral or a sink. 
If $i+1$ is neutral, $i+2$ must in turn be neutral or a sink; 
if $i+1$ is a sink, then $i+2$ must be neutral or a source. 
Repeating this reasoning at every vertex of $\mathbb{Z}_N$, we find that the configuration obeys the ice rule if and only if the sinks and sources alternate. Note that this alternation must be taken periodically -- i.e. if the first non-neutral vertex is a source, the final one must be a sink.
This translates immediately to the condition of interlacement between $\vec x$ and $\vec y$.
%

\end{proof}

\begin{corollary}\label{cor:MatrixEquality}
	The matrices $V$ defined in~\eqref{eq:V} and $\widetilde V$ are equal.\end{corollary}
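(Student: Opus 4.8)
The plan is to reduce everything to Lemma~\ref{lem:Interlacement}, which already records the cardinality of $\mathcal V(\Psi_{\vec x}, \Psi_{\vec y})$ in each of the three regimes appearing in the definition~\eqref{eq:V} of $V$. The only additional input needed is that, for fixed $\vec x$ and $\vec y$, \emph{every} configuration $\omega \in \mathcal V(\Psi_{\vec x}, \Psi_{\vec y})$ carries the same weight, namely $c^{P(\Psi_{\vec x}, \Psi_{\vec y})}$. Once this is established, one has $\widetilde V(\Psi_{\vec x}, \Psi_{\vec y}) = |\mathcal V(\Psi_{\vec x}, \Psi_{\vec y})| \cdot c^{P(\Psi_{\vec x}, \Psi_{\vec y})}$, and comparing with Lemma~\ref{lem:Interlacement} case by case will immediately match the three branches of~\eqref{eq:V}.

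To see that the weight is constant over $\mathcal V(\Psi_{\vec x}, \Psi_{\vec y})$, I would use that $a = b = 1$, so the weight of any $\omega$ collapses to $c^{n_5 + n_6}$, where $n_5 + n_6$ counts the $c$-type vertices. The key point is that the $c$-type vertices are exactly the sources and sinks identified in the proof of Lemma~\ref{lem:Interlacement}: a vertex whose two vertical arrows both enter (resp.\ both exit) is forced by the ice rule to have both horizontal arrows exit (resp.\ enter), which is precisely a type-$5$ or type-$6$ vertex, while a vertex with one vertical arrow in and one out is of one of the first four types. Crucially, whether vertex $i$ is a source or sink depends only on its two vertical arrows, which are prescribed by $\Psi_{\vec x}$ (below) and $\Psi_{\vec y}$ (above) and are independent of the horizontal arrows of $\omega$. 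Thus $i$ is a $c$-type vertex if and only if $\Psi_{\vec x}(i) \neq \Psi_{\vec y}(i)$, and summing over $i$ yields $n_5 + n_6 = P(\Psi_{\vec x}, \Psi_{\vec y})$ for every $\omega \in \mathcal V(\Psi_{\vec x}, \Psi_{\vec y})$.

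It then remains to combine the two ingredients. When $\vec x = \vec y$ we have $P(\Psi_{\vec x},\Psi_{\vec y}) = 0$ and $|\mathcal V(\Psi_{\vec x}, \Psi_{\vec y})| = 2$, so $\widetilde V = 2 = V$; when $\vec x \neq \vec y$ are interlaced, $|\mathcal V(\Psi_{\vec x}, \Psi_{\vec y})| = 1$ gives $\widetilde V = c^{P(\Psi_{\vec x}, \Psi_{\vec y})} = V$; and otherwise $|\mathcal V(\Psi_{\vec x}, \Psi_{\vec y})| = 0$ forces $\widetilde V = 0 = V$. I do not anticipate a serious obstacle here, since the substance of the argument already lives in Lemma~\ref{lem:Interlacement}. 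The one genuinely new observation is that $n_5 + n_6$ is determined by $\vec x, \vec y$ alone and equals $P(\Psi_{\vec x}, \Psi_{\vec y})$; this is exactly the source/sink bookkeeping, and I would make sure to state it carefully, as it is where both the hypothesis $a = b = 1$ and the definition of $P$ enter.
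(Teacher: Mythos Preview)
Your proposal is correct and follows essentially the same approach as the paper: both reduce to Lemma~\ref{lem:Interlacement} for the cardinality of $\mathcal V(\Psi_{\vec x},\Psi_{\vec y})$ and then identify $n_5+n_6$ with $P(\Psi_{\vec x},\Psi_{\vec y})$ via the source/sink description, using $a=b=1$. The only cosmetic difference is that you package the three cases uniformly through the formula $\widetilde V = |\mathcal V(\Psi_{\vec x},\Psi_{\vec y})|\cdot c^{P(\Psi_{\vec x},\Psi_{\vec y})}$, whereas the paper treats the diagonal and off-diagonal cases separately.
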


\begin{proof}
We first consider the diagonal terms. 
As mentioned before, for any basis vector $\Psi_{\vec{x}}$, 
$V(\Psi_{\vec{x}}, \Psi_{\vec{x}})$ contains exactly two configurations: those with all horizontal arrows point in the same direction. 
In both such configurations, no vertex is of type 5 or 6, and their weight is 1. 
We conclude that all diagonal terms of $V$ are indeed equal to $2$.

For off-diagonal terms, the above lemma implies that $\widetilde V(\Psi_{\vec{x}}, \Psi_{\vec{y}})$ is zero if the two configurations are not interlacing, and equal to the weight of the single configuration in $\mathcal V(\Psi_{\vec{x}}, \Psi_{\vec{y}})$ if they are interlacing. 
Assuming $\Psi_{\vec{x}}$ and $\Psi_{\vec{y}}$ are interlacing, since $a = b =1$, the weight of the unique configuration $\om \in \mathcal V(\Psi_{\vec{x}}, \Psi_{\vec{y}})$ is obtained by counting the total number of vertices of type 5 and 6, i.e.~sources and sinks. 
Observe that $P(\Psi_{\vec{x}}, \Psi_{\vec{y}})$ is the number of sources and sinks. Thus, 
$\widetilde V(\Psi_{\vec{x}}, \Psi_{\vec{y}}) = c^{P(\Psi_{\vec{x}}, \Psi_{\vec{y}})} = V(\Psi_{\vec{x}}, \Psi_{\vec{y}})$.
\end{proof}

\begin{proof}[Proposition~\ref{prop:transfer-matrix}]
The symmetry and block-diagonal nature of $V$ is evident from the formula that defines its entries. 

Pick $M$ basis vectors $ \Psi_{\vec{x}_1}, \dots ,\Psi_{\vec{x}_M}$ in $\Omega$, and define $Z(c; \Psi_{\vec{x}_1}, \dots ,\Psi_{\vec{x}_M})$ to be the sum of the weights of all configurations on $\mathbb{T}_{N,M}$ whose vertical arrows configuration between the $i$th and $(i+1)$th row is equal to $\Psi_{\vec{x}_i}$ for all $i\in \bbZ_M$. By definition of $\widetilde V$ and the multiplicative nature of the weights,
\[
	Z(c; \Psi_{\vec{x}_1}, \dots ,\Psi_{\vec{x}_M}) 
	= \prod_{i=1}^M \widetilde V(\Psi_{\vec{x}_{i}}, \Psi_{\vec{x}_{i+1}})
	= \prod_{i=1}^M {V}(\Psi_{\vec{x}_{i}}, \Psi_{\vec{x}_{i+1}}) \, ,      
\]
where $\Psi_{\vec{x}_{M+1}} = \Psi_{\vec{x}_1}$. Summing over all possible configurations of the vertical arrows, we find that
\begin{align}
	Z_{6V}(1,1,c) & 
	= \displaystyle \sum_{\Psi_{\vec{x}_1}, \dots , \Psi_{\vec{x}_M}} Z(c; \Psi_{\vec{x}_1}, \dots, \Psi_{\vec{x}_M}) 
	= \sum_{\Psi_{\vec{x}_1}} V^M(\Psi_{\vec{x}_1}, \Psi_{\vec{x}_1}) 
	= \Tr(V^M) \, .  
\end{align}

\end{proof}

\section{The XXZ Model}\label{sec:XXZ}

Recall from Section~\ref{sec:notation} the notation related to the XXZ model, namely $w_i$, $H_i$ and the hamiltonian $H$. 
%
We prove Theorem~\ref{thm:XXZ} in two steps. We first show that there exists $E$ such that $H\psi = E \psi$ 
-- i.e. that either $\psi$ is an eigenvector of $H$, or that it is 0. Then, we compute the value of $E$. 
The first step follows from showing that the matrices $H$ and $V$ commute (where $V$ is the transfer matrix of the six-vertex model), 
and hence are simultaneously diagonalizable.

\begin{lemma}\label{lemma:CommutingMatrices}
	We have $
	VH = HV $.
\end{lemma}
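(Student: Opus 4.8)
The plan is to verify $VH=HV$ entrywise, using the explicit descriptions of $V$ and of $H=\sum_{i}H_i$. Two preliminary reductions come first. Since $V$ (by Proposition~\ref{prop:transfer-matrix}) and each $H_i$ preserve the subspaces $\Omega_n$, it suffices to check the commutation on each $\Omega_n$ separately. Next, writing $\mathrm{DW}(\Psi):=\{i\in\bbZ_N:\Psi(i)\neq\Psi(i+1)\}$ for the set of ``domain walls'' of a configuration and $d(\Psi):=|\{i:\Psi(i)=\Psi(i+1)\}|-|\mathrm{DW}(\Psi)|=N-2|\mathrm{DW}(\Psi)|$, the definition~\eqref{eq:H_i} of $H_i$ gives, for any two basis vectors,
\[
(HV)(\Psi_{\vec x},\Psi_{\vec z})=\tfrac{\Delta}{2}\,d(\Psi_{\vec x})\,V(\Psi_{\vec x},\Psi_{\vec z})+\sum_{i\in \mathrm{DW}(\vec x)}V(w_i\Psi_{\vec x},\Psi_{\vec z}),
\]
\[
(VH)(\Psi_{\vec x},\Psi_{\vec z})=\tfrac{\Delta}{2}\,d(\Psi_{\vec z})\,V(\Psi_{\vec x},\Psi_{\vec z})+\sum_{i\in \mathrm{DW}(\vec z)}V(\Psi_{\vec x},w_i\Psi_{\vec z}),
\]
where I used that $w_i$ is an involution, that $w_i$ acts nontrivially exactly at the domain walls, and that $H(\Psi_{\vec y},\Psi_{\vec z})=1$ precisely when $\Psi_{\vec y}=w_i\Psi_{\vec z}$ for some $i\in\mathrm{DW}(\vec z)$. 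Since $d(\vec x)-d(\vec z)=2(|\mathrm{DW}(\vec z)|-|\mathrm{DW}(\vec x)|)$, the lemma reduces to the single scalar identity
\[
\sum_{i\in \mathrm{DW}(\vec z)}V(\Psi_{\vec x},w_i\Psi_{\vec z})-\sum_{i\in \mathrm{DW}(\vec x)}V(w_i\Psi_{\vec x},\Psi_{\vec z})=\Delta\big(|\mathrm{DW}(\vec z)|-|\mathrm{DW}(\vec x)|\big)V(\Psi_{\vec x},\Psi_{\vec z}),
\]
to be established for every pair $\vec x,\vec z$ with $|\vec x|=|\vec z|=n$.

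To prove this identity I would split on the relation between $\Psi_{\vec x}$ and $\Psi_{\vec z}$. If $\vec x=\vec z$, the right-hand side vanishes, and the two sums on the left coincide term by term (here $w_i\vec x=w_i\vec z$) by the symmetry of $V$ established in Corollary~\ref{cor:MatrixEquality}. If $\Psi_{\vec x}$ and $\Psi_{\vec z}$ are distinct and \emph{not} interlaced, then $V(\Psi_{\vec x},\Psi_{\vec z})=0$ and one must show the two remaining sums cancel, by matching each domain wall of $\vec z$ whose swap interlaces $\Psi_{\vec x}$ with a corresponding domain wall of $\vec x$. The decisive case is $\Psi_{\vec x},\Psi_{\vec z}$ interlaced and distinct, where $V(\Psi_{\vec x},\Psi_{\vec z})=c^{P(\vec x,\vec z)}$. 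There I would examine, wall by wall, the effect of a swap $w_i$ on $\vec z$ (and on $\vec x$): it changes the exponent $P$ by $0$ or $\pm2$ and either preserves interlacement — contributing $c^{P}$ or $c^{P\pm2}$ — or destroys it, contributing $0$. Collecting the resulting $c^{P}$, $c^{P+2}$ and $c^{P-2}$ weights and comparing with the right-hand side is exactly the point where the specific value $\Delta=(2-c^2)/2$ must enter to make both sides coincide.

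The main obstacle is precisely this bookkeeping in the interlaced case: one must classify each domain wall of $\vec x$ and of $\vec z$ according to the local arrow pattern of the other configuration at the two sites involved, decide whether the swap keeps the pair interlaced and how it shifts $P$, and then check that the signed total equals $\Delta\,(|\mathrm{DW}(\vec z)|-|\mathrm{DW}(\vec x)|)\,c^{P}$. This is a finite but delicate case analysis. A cleaner packaging, which I would use to organize it, is to localize the computation: rewrite $[V,H]=\sum_{i}[V,H_i]$ and show that each $[V,H_i]$ is a discrete difference of operators attached to the sites $i$ and $i+1$, so that summing over $i\in\bbZ_N$ telescopes to zero by periodicity. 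In either formulation the heart of the matter — and the only place where the relation between $c$ and $\Delta$ is used — is the single-wall weight balance described above.
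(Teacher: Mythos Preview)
Your reduction is correct and matches the paper's: both $V$ and $H$ are symmetric, and the entrywise identity you write down (with $\mathrm{DW}(\Psi)$ playing the role of the paper's $I_\Psi$) is exactly the equation the paper sets out to verify. The case split into $\vec x=\vec z$, non-interlaced, and interlaced is also the same, and your treatment of the first two cases is essentially the paper's.

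The gap is that you stop precisely where the actual work begins. In the interlaced case you correctly describe \emph{what} must be checked --- classify each wall by the local pattern of the other configuration, track whether the swap preserves interlacement and shifts $P$ by $0$ or $\pm 2$, and show the signed total equals $\Delta(|\mathrm{DW}(\vec z)|-|\mathrm{DW}(\vec x)|)c^{P}$ --- but you do not indicate \emph{how} to carry this out, and it is not just bookkeeping. The subtle point is that for walls $i$ in $\mathrm{DW}(\vec x)\cap\mathrm{DW}(\vec z)$ with $\Psi_{\vec x}(i)=\Psi_{\vec z}(i)$ (a pair of neutral vertices), exactly one of $w_i\Psi_{\vec x}$, $w_i\Psi_{\vec z}$ remains interlaced with the other, and which one depends on global information (the direction of the nearest sink/source). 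The paper handles this by introducing \emph{runs}: maximal blocks of neutral vertices between consecutive sinks/sources. Inside a run the contributions of these ``neutral'' walls alternate in sign, so each run contributes $0$ or $\pm 1$ depending on the parity of the number of such walls it contains; this parity in turn governs whether the two endpoints of the run lie in $I_{\Psi}\setminus I_{\Psi'}$ or $I_{\Psi'}\setminus I_{\Psi}$. Summing over runs then yields exactly $\tfrac{1}{2}(|I_\Psi|-|I_{\Psi'}|)$, which combines with the ``easy'' wall contributions and the relation $\Delta=(2-c^2)/2$ to close the identity. Without this run decomposition (or an equivalent device) the case analysis does not organize itself.

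Your alternative suggestion --- that $[V,H_i]$ is a discrete difference in $i$ and telescopes --- is not obviously viable: $V$ is a genuinely nonlocal operator (its matrix entries depend on the full interlacement pattern of $\vec x$ and $\vec z$), so there is no evident way to write $[V,H_i]=T_{i+1}-T_i$ for operators $T_i$ attached to single sites. If you pursue this route you would need to exhibit such $T_i$ explicitly, and I do not see a candidate.
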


\begin{proof}
	We have proved in Proposition~\ref{prop:transfer-matrix} that $V$ is a symmetric matrix. 
	Moreover, $H$ is also symmetric, as we observe from its definition. 
	Thus, it is sufficient to prove that $VH$ is itself a symmetric matrix to obtain the lemma. 

	For this proof, we suppress the dependence of basis vectors $\Psi_{\vec x}$ on $\vec{x}$ for notational convenience; 
	we will instead consider $\Psi$ as a function from $\mathbb{Z}_N$ to $\{-1,1\}$. 
	For such $\Psi$, define 
	\[
		I_\Psi := \left\{ i : \Psi(i) \neq \Psi(i+1) \right\}\, ,
	\] 
	where we use the periodic convention so that $N+1 \equiv 1$. 
	Then $w_i(\Psi)$ is nonzero whenever $i \in I_\Psi$. 
	Note that, due to periodicity, the cardinality of this set is always even. 
	
	Fix two basis vector $\Psi' \neq \Psi$. The definition of $H$ and the symmetry of $V$ imply that
	\begin{align}
		(VH)(\Psi, \Psi') - (VH)(\Psi', \Psi) 
		= & \Delta \cdot V(\Psi, \Psi') \left(|I_\Psi| - |I_{\Psi'}| \right) + \nonumber\\ 
		& + \displaystyle \sum_{i \in I_{\Psi'}} V(\Psi, w_i(\Psi')) -  \displaystyle \sum_{i \in I_{\Psi}} V(w_i(\Psi), \Psi')  \, .
		\label{eq:CommuteRelation}
	\end{align}
	Our aim is to prove that the above is zero for all $\Psi$ and $\Psi'$. 
	Recall the notation of Lemma~\ref{lem:Interlacement}: 
	we place the arrow configurations associated with $\Psi$ and $\Psi'$ below and above a copy of $\mathbb{Z}_N$, 
	denote this by $[\Psi, \Psi']$.
	A vertex $i$ is called a sink (resp. source) if $\Psi(i) \neq \Psi'(i)$ and $\Psi(i) = +1$ (resp. $-1$). Otherwise, $i$ is neutral. 
	Then $[\Psi, \Psi']$ is interlacing if and only if the sources and sinks alternate, considered periodically. 
	
	To start, assume that $\Psi$ and $\Psi'$ are not interlacing. 
	Then the first term in the right-hand side of~\eqref{eq:CommuteRelation} is null. 
	Moreover, all other terms also vanish 
	unless there exists some $i$ for which either $[w_i(\Psi),\Psi']$ or $[\Psi,w_i(\Psi')]$ is interlacing. 
	
	Suppose this is the case and consider one such $i$; by symmetry we can assume $[w_i(\Psi),\Psi']$ is interlacing. 
	Then the action of $w_i$ on $\Psi$ transforms an adjacent sink/source pair of  $[\Psi, \Psi']$ which occurred in the ``wrong'' order 
	into two neutral vertices, and thus creates the interlacing pair $[w_i(\Psi),\Psi']$. 
	Therefore, $\Psi(i) \neq \Psi'(i)$ and $\Psi(i+1) \neq \Psi'(i+1)$, and $i \in I_\Psi \cap I_{\Psi'}$. 
	Furthermore, if $[w_i(\Psi),\Psi']$ is interlacing, then so is $[\Psi,w_i(\Psi')]$, 
	as in both cases, the sink/source pair of $[\Psi, \Psi']$ is transformed by $w_i$ into a pair of neutral vertices. 
	Finally, both of these interlacing pairs have the same weight, as they exhibit the same number of sinks and sources.
	Thus their contribution to~\eqref{eq:CommuteRelation} cancels out, and by summing over $i$ the result is proved. 
	
	Assume now that $\Psi$ and $\Psi'$ are interlacing. 
	If $i \in I_\Psi \setminus I_{\Psi'}$, then $[\Psi, \Psi']$ has one neutral vertex and one sink or source at the vertices $i$ and $i+1$. 
	See Figure~\ref{fig:psipsi1} for the four possibilities.
	In this case, in $[w_i(\Psi), \Psi']$ we also find one of these four configurations at position $i,i+1$. 
	Moreover, the alternating sink/source structure is maintained in $[w_i(\Psi), \Psi']$, 
	and hence $w_i(\Psi)$ and $\Psi'$ are also interlacing. 
	Finally, the number of sources and sinks of $[w_i(\Psi), \Psi']$ is the same as in $[\Psi, \Psi']$, and we conclude that 
	$$ V(w_i(\Psi), \Psi') =  V(\Psi, \Psi').$$

	\begin{figure}
		\begin{center}
			\includegraphics[scale=0.9]{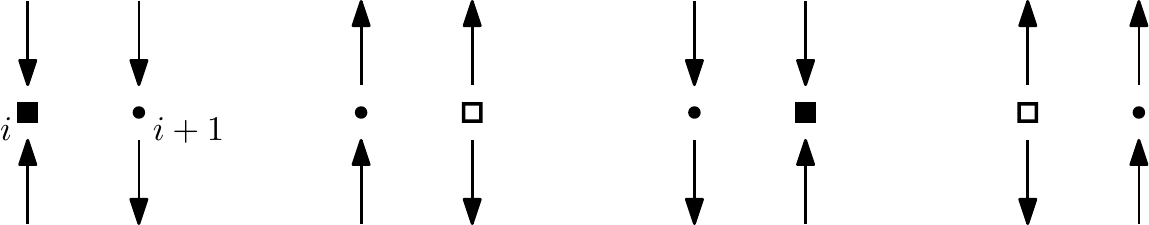}
			\caption{The four possibilities when $i \in I_{\Psi} \setminus I_{\Psi'}$. Here $\Psi$ is below and $\Psi'$ above. 
			The first and second configurations are mapped by $w_i$ to the third and fourth, respectively, and vice-versa.}
			\label{fig:psipsi1}
		\end{center}
	\end{figure}
	
	If $i \in I_\Psi \cap I_{\Psi'}$, there are two possible scenarios. 
	First, suppose $\Psi(i) \neq \Psi'(i)$. Then $[\Psi,\Psi']$ has a sink/source pair at $i$ and $i+1$. 
	Acting by $w_i$ on either configuration replaces the sink/source pair with a pair of neutral vertices.
	Thus, in this case 
	$$ V(w_i(\Psi), \Psi') = V(\Psi,w_i( \Psi')) = \frac{1}{c^2} V(\Psi, \Psi').$$
	Secondly, suppose that $\Psi(i) = \Psi'(i)$. 
	Then the vertices $i$, $i+1$ are both neutral in $[\Psi,\Psi']$, 
	and applying $w_i$ to either $\Psi$ or $\Psi'$ transforms them into a sink/source pair.
	The order in which this sink and source pair occurs in $[w_i(\Psi),\Psi']$ is reversed in $[\Psi,w_i(\Psi')]$. 
	Thus, exactly one of these pairs is interlacing, and we find that 
	\begin{align*}
		\text{either } &V(w_i(\Psi), \Psi') = {c^2} V(\Psi, \Psi') \text{ and } V(w_i(\Psi), \Psi') = 0\\
		\text{or } &V(\Psi, w_i(\Psi')) = {c^2} V(\Psi, \Psi') \text{ and } V(\Psi, w_i(\Psi')) = 0.
	\end{align*}
	
	Putting all this information together, through some straightforward algebra, we find that 
	\begin{align}\label{eq:csquaredsum}
		(VH)(\Psi, \Psi') - (VH)(\Psi', \Psi) 
		& =  V(\Psi, \Psi') \cdot \Big[ (\Delta -1) \left(|I_\Psi| - |I_{\Psi'}| \right)   + \nonumber \\ 
		&  + c^2 \cdot \sum_{i \in \overline{I}} 
		\left( \ind_{[\Psi, w_i(\Psi')] \text{ interlacing}} -  \ind_{[\Psi, w_i(\Psi')] \text{ interlacing}} \right)\Big]\, , 
	\end{align}
	where $\overline{I} = \{ i : i \in I_\Psi \cap I_{\Psi'}, \Psi(i) = \Psi'(i)\}$.
	Thanks to the definition of $\Delta$, the proof will be done if we can show that
	\[
		\displaystyle \sum_{i \in \overline{I}} 
		\left( \ind_{[\Psi, w_i(\Psi')] \text{ interlacing}} -  \ind_{[\Psi, w_i(\Psi')] \text{ interlacing}} \right) 
		= \frac{|I_\Psi| - |I_{\Psi'}|}{2} \, .
	\]
	Define a run $R \subset \mathbb{Z}_N$ to be a maximal connected subset of neutral vertices of $[\Psi,\Psi']$, taken periodically. 
	Let $l(R)$ and $r(R) +1$ be the vertices to the left (resp. right) of the run $R$.
	Note that, for $l(R)< i < r(R)$, 
	$i \in I_\Psi$ if and only if $i \in I_{\Psi'}$.
	Considering this, one may observe that the points of $\ovr I$ are exactly the points of $I_\Psi \cap I_{\Psi'}$ 
	that are contained inside some run. 
	The endpoints of the run however satisfy $l(R),r(R) \in I_\Psi \triangle I_{\Psi'}$.
	A short analysis shows that any point of $I_\Psi \triangle I_{\Psi'}$ is of the form $r(R)$ or $l(R)$ for some run $R$. 
	
	For a run $R$, write $N(R):=|R \cap \overline{I}|$. 
	Then the interlacement of $\Psi$ and $\Psi'$ implies that 
	\begin{align*}
		&l(R) \in I_\Psi \Longleftrightarrow r(R) \in \begin{cases} \ I_\Psi&\text{ if $N(R)$ is even},
 \\
\  I_{\Psi'}&\text{ if $N(R)$ is odd}.\end{cases}
	\end{align*}
	

	\begin{figure}[htb]
		\begin{center}
			\includegraphics{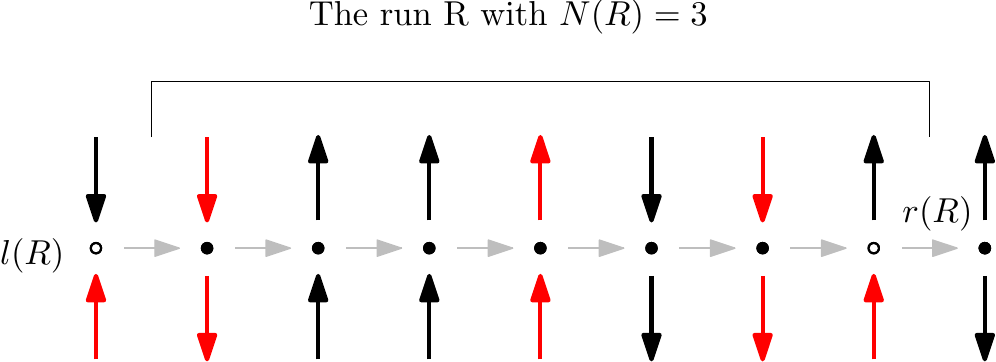}
		\end{center}
		\caption{An example of a run $R$. The red arrows are in $I_\Psi$ and $I_{\Psi'}$.}
	\end{figure}

	Fix a run $R$ and consider the sum 
	\begin{align}
		\label{eq:Rsum}
	 	\sum_{i \in \overline{I}\cap R} 
		\left( \ind_{[\Psi, w_i(\Psi')] \text{ interlacing}} -  \ind_{[\Psi, w_i(\Psi')] \text{ interlacing}} \right) \, .
	\end{align}
	Assume that $\Psi(l(R)) = +1$ and that $l(R) \in I_\Psi \setminus I_{\Psi'}$. 
	Let $i_1,i_2,\dots, i_{N(R)}$ be the elements of $\overline{I} \cap R$, ordered from left to right. 
	By assumption, there is a source at $l(R)$ and $\Psi(i_1) = \Psi'(i_1) =-1$. 
	It is easy to see that, in this case, the configuration $[\Psi, w_{i_1}(\Psi')]$ has a sink at $i_1$ and a source at $i_1 +1$, 
	and thus is an interlaced configuration. 
	However, $\Psi(i_2) = \Psi'(i_2) = +1$, and $[w_{i_2}(\Psi), \Psi']$ is interlaced - the opposite configuration as for $i_1$. 
	Repeating this, we see that the sum in~\eqref{eq:Rsum} is zero when $N(R)$ is even, and $+1$ when $N(R)$ is odd.
	In the former case, $r(R) \in I_{\Psi'}\setminus I_{\Psi}$, while in the latter, $r(R) \in I_{\Psi}\setminus I_{\Psi'}$.
	This is also valid when $N(R) =0$.
	
	The same procedure implies that, when $\Psi(l(R)) = +1$ and $l(R) \in I_{\Psi'}$, 
	the sum of~\eqref{eq:Rsum} is $-1$ when $N(R)$ is odd (thus when $r(R) \in I_{\Psi'}\setminus I_{\Psi}$) 
	and zero otherwise (i.e. when $r(R) \in I_{\Psi}\setminus I_{\Psi'}$). 
	The same analysis may be applied when $\Psi(l(R)) = -1$.
	In conclusion,
	\begin{align*}
		\sum_{R \text{ run }} \sum_{i \in \overline{I} \cap R } & \left( \ind_{[\Psi, w_i(\Psi')] \text{ interlacing}} -  \ind_{[\Psi, w_i(\Psi')] \text{ interlacing}} \right) \\ &= \displaystyle \sum_{R} \frac{(\ind_{l(R) \in I_{\Psi}} +  \ind_{r(R) \in I_{\Psi}}) - (\ind_{l(R) \in I_{\Psi'}} +  \ind_{r(R) \in I_{\Psi'}})}{2}\, .
	\end{align*}
	Since every element of $I_\Psi \triangle I_{\Psi'}$ is the boundary of some run, this completes the proof.
\end{proof}

\begin{proof}[Theorem~\ref{thm:XXZ}]
Lemma~\ref{lemma:CommutingMatrices} implies that $H \psi = E \psi$ for some $E$. It is sufficient to consider any individual coordinate to evaluate $E$. We choose to evaluate the coefficient of $\Psi_{(2,\dots,2n)}$, where we use the assumption $n \leq N/2$ to ensure that this coordinate vector is in $\Omega$. 
Thanks to the very simple structure of $H$,
we can explicitly compute the entry of $H\psi$ corresponding to $(2,\dots,2n)$:
\begin{align*}
	(H \psi)(2, \dots, 2n) 
	& = \frac{(N - 4 n) \Delta}{2} \cdot \psi(2, \dots, 2n)  + \\ 
	& + \sum_{i=1}^n \left[\psi(x_1, \dots, x_i - 1, \dots, x_n) + \psi(x_1, \dots, x_i +1, \dots, x_n)\right]\, .\nonumber    
\end{align*}
Now, thanks to the form of $\psi$, we deduce that
\[
(H \psi)(2, \dots, 2n) = \displaystyle \sum_{\sigma \in \mathfrak{S}_n} A_\sigma \prod_{k=1}^n z_{\sigma(k)}^{2k} \cdot \left[\frac{N \Delta}{2} - \sum_{k=1}^n \left(2 \Delta - \frac{1}{z_{\sigma(k)}} -  z_{\sigma(k)} \right) \right] \, . 
\]
The bracketed term is independent of $\sigma$, and is equal to 
\[
E = \frac{N \Delta}{2} - 2 \sum_{k=1}^n [\Delta - \cos(p_k)] \, ,
\]
as required.
\end{proof}

Note that the above computation is simple because of the choice of the coordinate $(2, \dots, 2n)$ and the simple action of the Hamiltonian. One may be tempted to reverse the procedure of this paper: prove that $H \psi = E \psi$ directly, and then look for a shrewd choice of coordinate to compute $\Lambda$. Unfortunately, the transfer matrix $V^{(n)}$ is far less well-behaved than $H$. Even in the most symmetric case, $(V^{(n)} \psi) (2, \dots, 2n)$ is a sum over exponentially many different coordinates of $\psi$ (as opposed to the linear number of terms above), many of which have dramatically different weights. 

\paragraph{Acknowledgements} The first and the third authors were funded by the IDEX grant of Paris-Saclay. The fifth author was funded by a grant from the Swiss NSF. All the authors are partially funded by the NCCR SwissMap.

\bibliographystyle{siam}
\bibliography{biblicomplete}

\begin{thebibliography}{10}

\bibitem{Bax72}
{\sc R.~J. Baxter}, {\em {Partition function of the eight vertex lattice
  model}}, Annals Phys., 70 (1972), pp.~193--228.

\bibitem{Bax89}
{\sc R.~J. Baxter}, {\em Exactly solved models in statistical mechanics},
  Academic Press Inc. [Harcourt Brace Jovanovich Publishers], London, 1989.
\newblock Reprint of the 1982 original.

\bibitem{Bethe31}
{\sc H.~Bethe}, {\em {Zur Theorie der Metalle I. Eigenwerte und Eigenfunktionen
  der Hnearen Atomkette}}, Zeitschrift f{\"u}r Physik, 71 (1931), pp.~205--226.

\bibitem{Korepin}
{\sc N.~M. Bogoliubov, A.~G. Izergin, and V.~E. Korepin}, {\em Quantum inverse
  scattering method and correlation functions}, Cambridge university press,
  1997.

\bibitem{BetheAnsatz2}
{\sc H.~Duminil-Copin, M.~Gagnebin, M.~Harel, I.~Manolescu, and V.~Tassion},
  {\em Discontinuity of phase transition for planar random-cluster and potts
  models with $q>4$}.
\newblock In preparation, 2016.

\bibitem{Jimbo}
{\sc M.~Jimbo and T.~Miwa}, {\em Algebraic Analysis of Solvable Lattice
  Models}, American Mathematical Soc., 1994.

\bibitem{Karbach2}
{\sc M.~Karbach, K.~Hu, and G.~M\"{u}ller}, {\em Introduction to the {B}ethe
  ansatz {II}}, Computers in Physics, 12 (1998), pp.~565--573.

\bibitem{Karbach3}
\leavevmode\vrule height 2pt depth -1.6pt width 23pt, {\em Introduction to the
  {B}ethe ansatz {III}}.
\newblock 2000.

\bibitem{Karbach1}
{\sc M.~Karbach and G.~M\"{u}ller}, {\em Introduction to the {B}ethe ansatz
  {I}}, Computers in Physics, 11 (1997), pp.~36--43.

\bibitem{Lie67b}
{\sc E.~Lieb}, {\em Residual entropy of square ice}, Physical Review, 162
  (1967), p.~162.

\bibitem{Resh10}
{\sc N.~{Reshetikhin}}, {\em {Lectures on the integrability of the 6-vertex
  model}}, ArXiv e-prints,  (2010).

\end{thebibliography}

\end{document}